\newcommand{\abs}[1]{\left\vert#1\right\vert}
\newcommand{\set}[1]{\left\{#1\right\}}
\newcommand{\Real}{\mathbb{R}}
\newcommand{\Toron}{\mathbb{T}^n}
\newcommand{\Z}{\mathbb{Z}}
\newcommand{\Zn}{\mathbb{Z}^n}
\newcommand{\N}{\mathbb{N}}
\newcommand{\PV}{\operatorname{P.V.}}
\newcommand{\Dom}{\operatorname{Dom}}
\newcommand{\Id}{\operatorname{I}}
\newcommand{\dive}{\operatornamewithlimits{div}}
\newcommand{\s}{\sigma}
\newtheorem{thm}{Theorem}[section]
\newtheorem{prop}[thm]{Proposition}
\newtheorem{lem}[thm]{Lemma}
\theoremstyle{definition}
\newtheorem{defn}[thm]{Definition}
\newtheorem{rem}[thm]{Remark}
\numberwithin{equation}{section}
\author[L. Roncal]{Luz Roncal}
\address{Departamento de Matem\'aticas y Computaci\'on\\
Universidad de La Rioja\\
26004 Logro\~no, Spain}
\email{luz.roncal@unirioja.es}
\author[P. R. Stinga]{Pablo Ra\'ul Stinga}
\address{Department of Mathematics\\
The University of Texas at Austin\\
1 University Station C1200\\
78712-1202 Austin, TX\\
United States of America}
\email{stinga@math.utexas.edu}
\thanks{This research was partially supported by grants MTM2012-36732-C03-02 
and MTM2011-28149-C02-01 from Spanish Government}
\keywords{Fractional Laplacian, multidimensional torus, multiple Fourier series, heat equation, extension problem, interior and boundary Harnack's inequalities, H\"older estimates}
\subjclass[2010]{Primary: 35B65, 35K08, 35R11, 47G20. Secondary: 35J70, 46E35}
\begin{document}

%%%%%%%%%%%%%%%%%%%%%%%%%%%%%%%%%%%%%%%%%%%%%%%%%%%%%%
\title[Fractional Laplacian on the torus]{Fractional Laplacian on the torus}
%%%%%%%%%%%%%%%%%%%%%%%%%%%%%%%%%%%%%%%%%%%%%%%%%%%%%%

%%%%%%%%%%%%%%%%%%%%%%%%%%%%%%%%%%%%%%%%%%%%%%%%%%%%%%
\begin{abstract}
We study the fractional Laplacian $(-\Delta)^{\sigma/2}$ on the $n$-dimensional torus $\Toron$, $n\geq1$. 
First, we present a general extension problem that describes \textit{any} fractional power $L^\gamma$, $\gamma>0$, where $L$ is a general nonnegative selfadjoint operator defined in an $L^2$-space. This generalizes to all $\gamma>0$ and to a large class of operators the previous known results by Caffarelli and Silvestre. In particular it applies to the fractional Laplacian on the torus. The extension problem is used to prove interior and boundary Harnack's inequalities for $(-\Delta)^{\sigma/2}$, when $0<\sigma<2$.
We deduce regularity estimates on H\"older, Lipschitz and Zygmund spaces.
Finally, we obtain the pointwise integro-differential formula for the operator.
Our method is based on the semigroup language approach.
\end{abstract}
%%%%%%%%%%%%%%%%%%%%%%%%%%%%%%%%%%%%%%%%%%%%%%%%%%%%%%

\maketitle

%\tableofcontents

%%%%%%%%%%%%%%%%%%%%%%%%%%%%%%%%%%%%%%%%%%%%%%%%%%%%%%
\section{Introduction and statement of results}
%%%%%%%%%%%%%%%%%%%%%%%%%%%%%%%%%%%%%%%%%%%%%%%%%%%%%%

Very recently there has been an increasing interest in the study of nonlinear partial differential equations involving fractional operators. Such problems arise naturally in applications like Fluid Dynamics \cite{Caffarelli-Vasseur, D, Kiselev-Nazarov-Volberg}, Strange Kinetics and Anomalous Transport \cite{Shlesinger-Zaslavsky-Klafter}, Financial Mathematics \cite{Caffarelli-Salsa-Silvestre, Silvestre}, among many others. There are some issues in these nonlinear nonlocal fractional problems, not covered by the general theory, in which tools like pointwise formulas, H\"older estimates and Harnack's inequalities are needed \cite{Caffarelli-Salsa-Silvestre, Caffarelli-Vasseur, D, Silvestre, Stinga, Stinga-Zhang}.

We develop a systematic study of the fractional powers of the Laplacian
on the $n$-dimensional torus $\Toron$, $n\geq1$:
$$(-\Delta)^{\sigma/2}\equiv(-\Delta_{\Toron})^{\sigma/2}.$$
 This operator arises in models with periodic boundary conditions,
see for example \cite{D, Kiselev-Nazarov-Volberg}. Our aim is to prove Harnack's inequalities,
regularity estimates and pointwise formulas by using the semigroup language approach.

For an integrable function $f$ on $\Toron$ we write its multiple Fourier series expansion as
$$f(x)=\sum_{\nu\in\Zn}c_\nu(f)e^{i\nu\cdot x},$$
where the Fourier coefficient is defined as
$$c_\nu(f)=\frac{1}{(2\pi)^n}\int_{\Toron}f(x)e^{-i\nu\cdot x}\,dx,$$
and $x\cdot\nu=x_1\nu_1+\cdots+x_n\nu_n$, $x\in\Toron$.
The fractional Laplacian on the torus is given by
\begin{equation}\label{series toro}
(-\Delta)^{\sigma/2}f(x)=\sum_{\nu\in\Zn}|\nu|^{\sigma}c_\nu(f)e^{i\nu\cdot x},\quad x\in\Toron,~
\sigma>0.
\end{equation}
This is a nonlocal operator when $\sigma/2$ is not an integer.
Recall that in \cite{Caffarelli-Silvestre} L. Caffarelli and L. Silvestre showed that the fractional Laplacian on $\Real^n$ can be determined as an operator that maps a Dirichlet boundary condition to a Neumann-type condition via an extension problem. Namely, let $u$ be the solution to the boundary value problem
$$
\begin{cases}
\Delta_{\Real^n}u+\frac{1-\sigma}{\tau}u_\tau+u_{\tau\tau}=0,&\hbox{in}~\Real^n\times(0,\infty),\\
u(x,0)=f(x),&\hbox{on}~\Real^n.
\end{cases}$$
Then there exists a constant $c_\sigma>0$ such that
$$
-\lim_{\tau\to0^+}\tau^{1-\sigma}
u_\tau(x,\tau)=c_\sigma(-\Delta_{\Real^n})^{\sigma/2}f(x),\quad x\in\Real^n.
$$
In this paper
we present an extension result for fractional operators $L^\gamma$, where $\gamma$ is \textit{any} noninteger positive number.\footnote{In this way we answer a question raised by Ricardo G. Dur\'an about
the description of higher-order fractional Laplacians via an extension problem.} In particular, it applies to
any higher power $(-\Delta)^{\sigma/2}$, $\sigma>0$.

\begin{thm}[Higher order extension problem]
\label{thm:Extension problem}
Let $L$ be a nonnegative self-adjoint linear operator on a Hilbert space $L^2(\Omega)$.
Let $\gamma\in(0,\infty)\setminus\N$, and let $f$ be in the domain of $L^{\gamma}$. A solution $u\in C^\infty((0,\infty);\Dom(L))\cap C([0,\infty);L^2(\Omega))$ of the extension problem
\begin{equation}\label{extension}
\begin{cases}
      -L_xu+\frac{1-2\gamma}{\tau}\,u_\tau+u_{\tau\tau}=0, & \hbox{in}~\Omega\times(0,\infty), \\
      u(x,0)=f(x), & \hbox{on}~\Omega,
\end{cases}
\end{equation}
is given by
\begin{equation}
\label{eq:Solution Ext Problem}
u(x,\tau)=\frac{\tau^{2\gamma}}{4^\gamma\Gamma(\gamma)}\int_0^\infty e^{-tL}f(x)\,e^{-\frac{\tau^2}{4t}}\,\frac{dt}{t^{1+\gamma}},
\end{equation}
and
\begin{equation}
\label{eq:limitRecovering}
\lim_{\tau\to0^+}\tau^{1-2(\gamma-[\gamma])}\partial_\tau\left((\tau^{-1}\partial_\tau)^{[\gamma]}u(x,\tau)\right)=\mu_\gamma L^\gamma f(x),
\end{equation}
where $[\gamma]$ is the integer part of $\gamma$ and
$$\mu_\gamma=\frac{4^{\gamma-[\gamma]}\Gamma(\gamma-[\gamma])}{2(\gamma-[\gamma])\Gamma(-(\gamma-[\gamma]))}\cdot \frac{1}{2^{[\gamma]}(\gamma-[\gamma])(\gamma-[\gamma]+1)\cdots(\gamma-1)}.$$
\end{thm}

Notice that in \cite{ST} the Caffarelli--Silvestre extension problem was generalized to apply to fractional operators $L^\gamma$, but only for $0<\gamma<1$.

By using the extension problem, we can prove
interior and boundary Harnack's inequalities for the fractional Laplacian on the torus
when $0<\sigma<2$.

\begin{thm}[Interior Harnack inequality]\label{Thm:interior Harnack}
Let $0<\sigma<2$ and let $\mathcal{O}\subseteq\Toron$ be an open set. Fix a compact subset $K\subset \mathcal{O}$. There exists a positive constant $C$ depending only on $n, \sigma$ and $K$ such that
$$\sup_Kf\leq C\inf_Kf,$$
for all solutions  to
$$\begin{cases}
(-\Delta)^{\s/2}f=0,&\hbox{in}~L^2(\mathcal{O}),\\
f\geq0,&\hbox{on}~\Toron\\
f\in\Dom(-\Delta).
\end{cases}$$
As a consequence, any solution $f$ of the problem above is a H\"older continuous function in $K$.
\end{thm}

\begin{thm}[Boundary Harnack's inequality]\label{Thm:boundary Harnack}
Let $0<\sigma<2$ and $f_1,f_2\in\Dom(-\Delta)$ be two nonnegative functions on $\Toron$. Suppose that $(-\Delta)^{\sigma/2}f_j=0$ in $L^2(\mathcal{O})$, for some open set $\mathcal{O}\subseteq\Toron$. Let $x_0\in \partial \mathcal{O}$ and assume that $f_j=0$  for all $x\in B_r(x_0)\cap \mathcal{O}^c$. Assume also that $\partial \mathcal{O}\cap B_r(x_0)$ is a Lipschitz graph in the direction of $x_1$. Then, there is a constant $C$ depending on $\mathcal{O}$, $x_0$, $r$, $n$ and $\sigma$, but not on $f_1$ or $f_2$, such that
$$\sup_{x\in \mathcal{O}\cap B_{r/2}(x_0)}\frac{f_1(x)}{f_2(x)}\leq C\inf_{x\in \mathcal{O}\cap B_{r/2}(x_0)}\frac{f_1(x)}{f_2(x)}.$$
Moreover, $\frac{f_1}{f_2}$ is $\alpha$-H\"older continuous in $\overline{\mathcal{O}\cap B_{r/2}(x_0)}$, for some universal $0<\alpha<1$.
\end{thm}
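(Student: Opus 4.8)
The plan is to reduce the statement to a boundary Harnack principle for a degenerate elliptic equation via the extension problem, following the scheme of \cite[Section~5]{Caffarelli-Silvestre}; since the torus is locally isometric to $\Real^n$ and the degeneracy weight is unchanged, every \emph{local} argument there should carry over. First I would take, for $j=1,2$, the extension $u_j$ of $f_j$ given by formula \eqref{u} and set $\tilde u_j(x,y):=u_j(x,|y|)$ on $\Toron\times\Real$. Arguing exactly as in the proof of Theorem \ref{Thm:interior Harnack} (the even reflection of $u_j$ is a weak solution across $\mathcal{O}\times\{0\}$ precisely because $(-\Delta)^{\sigma/2}f_j=0$ in $L^2(\mathcal{O})$, together with \eqref{lim}), each $\tilde u_j$ is a nonnegative weak solution of
\begin{equation*}
\dive\big(|y|^{1-\sigma}\nabla\tilde u_j\big)=0
\end{equation*}
in the open slit domain $\mathcal{D}:=\big(B_r(x_0)\times(-R,R)\big)\setminus\big(\mathcal{O}^c\times\{0\}\big)$, for some $R>0$. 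Since $|y|^{1-\sigma}$ belongs to the Muckenhoupt class $A_2$, the regularity theory of \cite{Fabes-Kenig-Serapioni} gives that $\tilde u_j$ is continuous up to the slit, and $\tilde u_j=0$ on $(\mathcal{O}^c\cap B_r(x_0))\times\{0\}$ because there $f_j=0$. As $\tilde u_j(\cdot,0)=f_j$, and $f_2\not\equiv0$ forces $\tilde u_2>0$ in $\mathcal{D}$ by Theorem \ref{Thm:interior Harnack}, so that the ratio $\tilde u_1/\tilde u_2$ is well defined, every estimate I prove for $\tilde u_1/\tilde u_2$ on $\mathcal{D}$ will descend, by restriction to $\{y=0\}$, to the asserted estimates for $f_1/f_2$.

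It then remains to prove the boundary Harnack principle: if $U_1,U_2\ge0$ solve $\dive(|y|^{1-\sigma}\nabla U_i)=0$ in $\mathcal{D}$ and vanish continuously on the Lipschitz portion of the slit through $(x_0,0)$ (this is where the hypothesis that $\partial\mathcal{O}\cap B_r(x_0)$ is a Lipschitz graph in the $x_1$-direction enters), then $U_1/U_2$ satisfies the Harnack-type bound and is Hölder continuous near $(x_0,0)$. I would assemble the standard ingredients, all of them exactly as in \cite[Section~5]{Caffarelli-Silvestre}: (a) the interior Harnack inequality of Theorem \ref{Thm:interior Harnack}; (b) the maximum/comparison principle and boundary Hölder continuity for the degenerate equation; (c) explicit barriers, i.e.\ sub- and supersolutions of $\dive(|y|^{1-\sigma}\nabla U)=0$ vanishing on the slit, built from powers of the distance to the slit; and (d) a Carleson-type estimate $\sup_{\mathcal{D}_\rho}U\le C\,U(A_\rho)$, where $\mathcal{D}_\rho:=\mathcal{D}\cap\big(B_\rho(x_0)\times(-\rho,\rho)\big)$ and $A_\rho$ is a fixed interior point with $\operatorname{dist}(A_\rho,(x_0,0))\sim\rho$. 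Items (c)--(d) come from comparison with the barriers and chaining the interior Harnack inequality along a Harnack chain in the Lipschitz-slit geometry, in the style of Caffarelli--Fabes--Mortola--Salsa.

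With (a)--(d) in hand I would run the usual oscillation-decay iteration on the quotient: the Carleson estimate and the barriers produce, at each scale $\rho\le r/2$, constants $0<c_\rho<C_\rho$ with $c_\rho\tilde u_2\le\tilde u_1\le C_\rho\tilde u_2$ on $\mathcal{D}_\rho$ and $C_\rho-c_\rho$ decaying geometrically, equivalently
\begin{equation*}
\operatorname{osc}_{\mathcal{D}_{\rho/2}}\frac{\tilde u_1}{\tilde u_2}\ \le\ \theta\,\operatorname{osc}_{\mathcal{D}_{\rho}}\frac{\tilde u_1}{\tilde u_2},\qquad\theta\in(0,1),
\end{equation*}
obtained by applying the comparison principle to $\tilde u_1-c_\rho\tilde u_2$ and $C_\rho\tilde u_2-\tilde u_1$. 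Iterating yields both that $\tilde u_1/\tilde u_2$ is bounded above and below on $\mathcal{D}_{r/2}$ --- the asserted Harnack ratio inequality --- and that it is $\alpha$-Hölder continuous up to $\{y=0\}$ for a universal $\alpha\in(0,1)$; evaluating at $y=0$ finishes the proof.

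The main obstacle is steps (b)--(d): transplanting the boundary regularity, the barrier constructions and the Carleson estimate of Fabes--Kenig--Serapioni and Caffarelli--Silvestre to the present slit geometry, i.e.\ verifying that the weighted Sobolev theory for $|y|^{1-\sigma}$ keeps working when the part of $\{y=0\}$ on which solutions are forced to vanish is the subgraph of a Lipschitz function. Since the weight and the equation coincide with those in \cite{Caffarelli-Silvestre} and $\Toron$ is locally $\Real^n$, this should amount to a routine localization around $(x_0,0)$, with no essentially new difficulty.
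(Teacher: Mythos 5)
Your setup (the even reflection $\tilde u_j(x,y)=u_j(x,|y|)$, the fact that it is a nonnegative weak solution of $\dive(|y|^{1-\sigma}\nabla\tilde u_j)=0$ off the slit $\mathcal{O}^c\times\{0\}$, and the reduction of the statement to a boundary Harnack principle for $\tilde u_1/\tilde u_2$) agrees with the paper. The gap is in what you dismiss as the "routine" part, namely steps (b)--(d). The region in which you want to run the Caffarelli--Fabes--Mortola--Salsa machinery is a ball of $\Real^{n+1}$ minus the slit $(\mathcal{O}^c\cap B_r(x_0))\times\{0\}$, and the set on which the solutions vanish is that codimension-one slit sitting \emph{inside} the domain, with an edge $\partial\mathcal{O}\times\{0\}$ of codimension two. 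This is not a Lipschitz (nor NTA) domain: the complement fails the exterior corkscrew condition along the slit, so the boundary H\"older continuity, the Carleson estimate and the comparison theorems of Fabes--Jerison--Kenig, as well as barriers built from powers of the distance to the slit, do not transfer by "localization". This slit geometry is exactly the difficulty the theorem has to overcome, and asserting that it presents "no essentially new difficulty" begs the question.

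The paper resolves it with two changes of variables rather than by redoing the CFMS program. First a bilipschitz map $\Psi$, constant in $y$, flattens $\partial\mathcal{O}\cap B_r(x_0)$ and turns the equation into $\dive(|y|^{1-\sigma}\mathcal{B}\nabla v_j)=0$ with $\mathcal{B}$ uniformly elliptic, the slit becoming $\{x_1\le0,\,y=0\}$. Then the angle-halving map $\Phi$, acting in the $(x_1,y)$-plane by $(\rho\cos\theta,\rho\sin\theta)\mapsto(\rho\cos\tfrac{\theta}{2},\rho\sin\tfrac{\theta}{2})$, unfolds the slit complement onto the half-space $\{X_1>0\}$; the transformed functions $w_j=v_j\circ\Phi^{-1}$ solve $\dive(\mathcal{C}\nabla w_j)=0$ with a new weight $m(X,Y)=\bigl|2X_1Y/\sqrt{X_1^2+Y^2}\bigr|^{1-\sigma}$ that is still $A_2$, and they vanish on the genuinely Lipschitz (flat) boundary $\{X_1=0\}$ of a half-ball. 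At that point the boundary Harnack principle of Fabes--Jerison--Kenig for $A_2$-degenerate equations in Lipschitz domains applies off the shelf, and one restricts back to $y=0$. To complete your argument you would need either to supply this unfolding step or to actually develop the barrier/Carleson/oscillation-decay theory in slit domains, which is a genuine piece of work and not a localization of the existing results.
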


We also analyze regularity properties of the fractional Laplacian
on the torus
 on H\"older, Lipschitz and Zygmund spaces. Our idea is to characterize all these spaces of smooth functions with the heat semigroup $e^{t\Delta}$, see Proposition \ref{Prop:lambdas-n}. Then we can take advantage of the semigroup formula for the fractional Laplacian on $\Toron$:
\begin{equation}\label{formula semigrupo calor}
(-\Delta)^{\sigma/2}f(x)=\frac{1}{\Gamma(-\s/2)}
\int_0^{\infty}\big(e^{t\Delta}f(x)-f(x)\big)\,\frac{dt}{t^{1+\sigma/2}}, \quad 0<\sigma<2.
\end{equation}

\begin{thm}[Interaction with H\"older spaces]\label{Thm:Holder-n}
Let $\alpha\in(0,1]$ and $0<\sigma<2$.
\begin{itemize}
    \item[(1)] Let $f\in C^{0,\alpha}(\Toron)$ and $\sigma<\alpha$. Then $(-\Delta)^{\sigma/2}f\in C^{0,\alpha-\sigma}(\Toron)$ and
     $$\|(-\Delta)^{\sigma/2}f\|_{C^{0,\alpha-\sigma}(\Toron)}\leq C\|f\|_{C^{0,\alpha}(\Toron)}.$$
    \item[(2)] Let $f\in C^{1,\alpha}(\Toron)$ and $\sigma<\alpha$. Then $(-\Delta)^{\sigma/2}f\in C^{1,\alpha-\sigma}(\Toron)$ and
     $$\|(-\Delta)^{\sigma/2}f\|_{C^{1,\alpha-\sigma}(\Toron)}\leq C\|f\|_{C^{1,\alpha}(\Toron)}.$$
    \item[(3)] Let $f\in C^{1,\alpha}(\Toron)$ and $\sigma\geq\alpha$, with $\alpha-\sigma+1\neq0$. Then $(-\Delta)^{\sigma/2}f\in C^{0,\alpha-\sigma+1}(\Toron)$ and
     $$\|(-\Delta)^{\sigma/2}f\|_{C^{0,\alpha-\sigma+1}(\Toron)}\leq C\|f\|_{C^{1,\alpha}(\Toron)}.$$
    \item[(4)] Let $f\in C^{k,\alpha}(\Toron)$ and assume that $k+\alpha-\sigma$ is not an integer. Then $(-\Delta)^{\sigma/2}f\in C^{l,\beta}(\Toron)$, where $l$ is the integer part of $k+\alpha-\sigma$ and $\beta=k+\alpha-\sigma-l$.
\end{itemize}
\end{thm}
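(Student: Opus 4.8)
\medskip
\noindent\textbf{Proof idea.} The plan is to avoid the pointwise formula \eqref{A} and argue with the heat-semigroup representation \eqref{formula semigrupo calor}, in the spirit of Proposition \ref{Prop:lambdas-n}. Two uniform smoothing estimates for $T_t=e^{t\Delta}$ on $\Toron$ do all the work. First, for $f$ in the H\"older--Zygmund space $\Lambda^s(\Toron)$ of order $s>0$,
\[
\norm{T_tf-f}_{L^\infty(\Toron)}\le C\,t^{s/2}\norm{f}_{\Lambda^s},\qquad 0<t\le1,
\]
with $\norm{T_tf-f}_{L^\infty}\lesssim\norm f_{L^\infty}$ (and exponential gain on $T_tf-(2\pi)^{-n}\int_{\Toron}f$) for $t\ge1$; for $s\in(0,2]$ this comes from writing $T_tf(x)-f(x)=\int_{Q_n}W_t(z)\big(f(x+z)-f(x)-\nabla f(x)\cdot z\big)\,dz$ and using the vanishing first moment $\int_{Q_n}z_iW_t(z)\,dz=0$ as in the proof of Theorem \ref{thm:puntual}, and for larger $s$ by peeling off powers of $\Delta$. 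Second,
\[
\norm{(-\Delta)^{\s/2}e^{t\Delta}}_{L^\infty(\Toron)\to L^\infty(\Toron)}\le C\,t^{-\s/2},\qquad t>0,
\]
which follows from $(-\Delta)^{\s/2}W_t=\frac{1}{-\Gamma(-\s/2)}\int_0^\infty(W_{t+r}-W_t)\,\frac{dr}{r^{1+\s/2}}$ together with $\norm{\Delta W_t}_{L^1(\Toron)}\le Ct^{-1}$ and $\norm{W_t}_{L^1(\Toron)}=1$.

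For (1) I would write, with $h=\abs{x-x'}$,
\[
(-\Delta)^{\s/2}f(x)-(-\Delta)^{\s/2}f(x')=\frac{1}{\Gamma(-\s/2)}\int_0^\infty\big[(T_tf-f)(x)-(T_tf-f)(x')\big]\frac{dt}{t^{1+\s/2}}
\]
and split the integral at $t=h^2$: for $t<h^2$ bound the bracket by $2\norm{T_tf-f}_{L^\infty}\le Ct^{\alpha/2}[f]_{C^\alpha}$, so $\int_0^{h^2}t^{\alpha/2-1-\s/2}\,dt\sim h^{\alpha-\s}$ (this is where $\s<\alpha$ is used); for $t\ge h^2$ use the cancellations $\abs{f(x)-f(x')}\le[f]_{C^\alpha}h^\alpha$ and $\abs{T_tf(x)-T_tf(x')}\le h\,\norm{\nabla T_tf}_{L^\infty}\lesssim h\,t^{(\alpha-1)/2}[f]_{C^\alpha}$ (the gradient bound again from heat kernel estimates on $\Toron$), both of which integrate to $\sim h^{\alpha-\s}$ against $t^{-1-\s/2}\,dt$ on $(h^2,\infty)$. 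Item (2) then follows at once, since $(-\Delta)^{\s/2}$ is a Fourier multiplier and hence $\partial_{x_i}(-\Delta)^{\s/2}f=(-\Delta)^{\s/2}\partial_{x_i}f$ with $\partial_{x_i}f\in C^{0,\alpha}$: apply (1). For (3) I would run the same splitting but use one extra order of cancellation: $\norm{T_tf-f}_{L^\infty}\le Ct^{(1+\alpha)/2}[\nabla f]_{C^\alpha}$ for $t\le1$ (used on $t<h^2$), while $\nabla T_tf=T_t\nabla f$ and the fundamental theorem of calculus give $\abs{(T_tf-f)(x)-(T_tf-f)(x')}\le h\,\norm{T_t\nabla f-\nabla f}_{L^\infty}\lesssim h\,t^{\alpha/2}[\nabla f]_{C^\alpha}$ (used on $h^2\le t\le1$), with the $t\ge1$ tail absorbed by exponential decay (recall $\int_{\Toron}\nabla f=0$); assembling the pieces gives (3) when $\alpha<\s<\alpha+1$.

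For the general statement (4) I would not argue by hand but quote the characterization in Proposition \ref{Prop:lambdas-n}: $C^{k,\alpha}(\Toron)\subset\Lambda^{k+\alpha}(\Toron)$ (with equality if $\alpha<1$), and for $r>0$, $g\in\Lambda^r(\Toron)$ iff $\norm g_{L^\infty}+\sup_{t>0}t^{N-r/2}\norm{\partial_t^NT_tg}_{L^\infty}<\infty$ for one fixed integer $N>r/2$. Fix $N>(k+\alpha)/2$ and put $g=(-\Delta)^{\s/2}f$. Since $(-\Delta)^{\s/2}$, $T_t$ and $\partial_t$ are all functions of $\Delta$,
\[
\partial_t^NT_tg=(-\Delta)^{\s/2}\partial_t^NT_tf=\big((-\Delta)^{\s/2}e^{(t/2)\Delta}\big)\big(\partial_t^NT_{t/2}f\big),
\]
so the second smoothing estimate and the characterization of $f\in\Lambda^{k+\alpha}$ give $\norm{\partial_t^NT_tg}_{L^\infty}\le Ct^{-\s/2}\,t^{(k+\alpha)/2-N}\norm f_{\Lambda^{k+\alpha}}=Ct^{(k+\alpha-\s)/2-N}\norm f_{\Lambda^{k+\alpha}}$. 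Together with $\norm g_{L^\infty}\le C\norm f_{C^{k,\alpha}}$ (from (1)--(3) if $k\le1$, and from $\norm{T_tf-f}_{L^\infty}\le Ct\norm f_{C^2}$ on $(0,1]$ if $k\ge2$) and $N>(k+\alpha)/2>(k+\alpha-\s)/2$, the converse half of Proposition \ref{Prop:lambdas-n} yields $g\in\Lambda^{k+\alpha-\s}(\Toron)$ with norm controlled by $\norm f_{C^{k,\alpha}}$. Since $k+\alpha-\s$ is not an integer, $\Lambda^{k+\alpha-\s}=C^{l,\beta}$ with $l=\lfloor k+\alpha-\s\rfloor$, $\beta=k+\alpha-\s-l\in(0,1)$, which is (4); this scheme also recovers (1)--(3), whose target exponents are non-integral, apart from the endpoint $\s=\alpha$ in (3), which lands in the Zygmund class.

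I expect the main obstacle to be the two smoothing lemmas: forcing $\norm{T_tf-f}_{L^\infty}\lesssim t^{s/2}\norm f_{\Lambda^s}$ and $\norm{(-\Delta)^{\s/2}e^{t\Delta}}_{L^\infty\to L^\infty}\lesssim t^{-\s/2}$ to hold \emph{uniformly} for all $t>0$, which requires controlling the periodized Gaussian $W_t$ both as $t\to0^+$ (where it is essentially the Euclidean heat kernel) and as $t\to\infty$ (where it collapses to the constant $(2\pi)^{-n}$, the difference decaying exponentially in $L^1$) --- this is precisely where $W_t$ being a series and not a closed expression must be dealt with, though harmlessly. Proposition \ref{Prop:lambdas-n} presumably already packages the first of these. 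The rest --- the exponent bookkeeping in (4), keeping $\lfloor k+\alpha-\s\rfloor$ and the fractional part straight and away from integers --- is routine.
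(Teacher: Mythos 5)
Your proposal is correct, and while it shares the paper's semigroup philosophy it is organized quite differently. The paper deduces the whole theorem in one stroke from Theorem \ref{Thm:Lambdas-n} ($f\in\Lambda_\beta(\Toron)\Rightarrow(-\Delta)^{\s/2}f\in\Lambda_{\beta-\s}(\Toron)$) together with the identification $\Lambda_\beta(\Toron)=C^{[\beta],\beta-[\beta]}(\Toron)$ of Proposition \ref{Prop:lambdas-n}; the proof of Theorem \ref{Thm:Lambdas-n} splits $\int_0^\infty(T_sf-f)\,s^{-1-\s/2}\,ds$ at $s=t$ and estimates $t^k\partial_t^kT_t$ of each piece when $0<\s<1$, and for $1\le\s<2$ factors $(-\Delta)^{\s/2}=(-\Delta)^{(\s-1)/2}R\nabla$ and invokes the boundedness of the Riesz transforms on $\Lambda_\gamma(\Toron)$ (Zygmund, Calder\'on--Zygmund). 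Your treatment of (4) replaces all of this by the single commutation $\partial_t^NT_t(-\Delta)^{\s/2}f=\big((-\Delta)^{\s/2}T_{t/2}\big)\big(\partial_s^NT_sf\big|_{s=t/2}\big)$ combined with the uniform bound $\norm{(-\Delta)^{\s/2}W_t}_{L^1(\Toron)}\le Ct^{-\s/2}$; this handles all $\s\in(0,2)$ at once and dispenses with the Riesz transforms, which is a genuine simplification. Your sketch of that kernel bound, via $\norm{W_{t+r}-W_t}_{L^1(\Toron)}\le\min(2,Cr/t)$ and the exponential decay of all nonzero modes for large $t$, is sound. Your direct difference estimates for (1)--(3), splitting the time integral at $t=|x-x'|^2$, are also correct but are subsumed by the argument for (4) except at the endpoint $\s=\alpha$ of (3); there your method, exactly like the paper's, only lands in the Zygmund class $\Lambda_1(\Toron)=\Lambda_\ast(\Toron)\varsupsetneq C^{0,1}(\Toron)$, an imprecision already present in the paper's statement, so no fault of yours. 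To make the write-up complete you would only need to prove the two smoothing lemmas in full (the first is essentially the $C^{0,\beta}\Rightarrow\Lambda_\beta$ half of the proof of Proposition \ref{Prop:lambdas-n}(ii)) and to add the routine approximation argument, via $f_j=T_{1/j}f$ as in Lemma \ref{lem:periodic distribution}, that justifies commuting $(-\Delta)^{\s/2}$ with $\partial_{x_i}$ and with $T_t$ on merely H\"older-continuous $f$.
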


From \eqref{formula semigrupo calor} we can obtain pointwise formulas.

\begin{thm}[Pointwise formula]\label{thm:puntual}
For $0<\sigma<2$ we define the following positive kernel on $\Toron$:
\begin{equation}\label{kernel}
\begin{aligned}
K^{\sigma/2}(x) &:=\frac{1}{|\Gamma(-\sigma/2)|}\int_0^\infty W_t(x)\,\frac{dt}{t^{1+\sigma/2}} \\
&=\frac{2^\sigma\Gamma(\frac{n+\sigma}{2})}{|\Gamma(-\sigma/2)|\pi^{n/2}}\sum_{\nu\in\Zn}\frac{1}{|x-2\pi\nu|^{n+\sigma}},
\quad x\in\Toron,~x\neq0.
\end{aligned}
\end{equation}
\begin{enumerate}[$(1)$]
\item Suppose that $0<\sigma<1$. If $f\in C^{0,\sigma+\varepsilon}(\Toron)$, for some $\varepsilon>0$ such that $0<\sigma+\varepsilon\le1$, then $(-\Delta)^{\sigma/2}f$ is a continuous function and, for all $x\in\Toron$,
\begin{equation}\label{puntual01}
(-\Delta)^{\sigma/2}f(x)=\int_{\Toron}(f(x)-f(y))K^{\sigma/2}(x-y)\,dy.
\end{equation}
The integral above is absolutely convergent.
\item Suppose that $1\leq\sigma<2$. If $f\in C^{1,\sigma+\varepsilon-1}(\Toron)$, for some $\varepsilon>0$ such that $0<\sigma+\varepsilon-1\le1$, then $(-\Delta)^{\sigma/2}f$ is a continuous function and, for all $x\in\Toron$,
\begin{equation}\label{puntual12}
\begin{aligned}
(-\Delta)^{\sigma/2}f(x)&= \int_{\Toron}(f(x)-f(y)-\nabla f(x)\cdot(x-y))K^{\sigma/2}(x-y)\,dy \\
&=\PV\int_{\Toron}(f(x)-f(y))K^{\sigma/2}(x-y)\,dy.
\end{aligned}
\end{equation}
\end{enumerate}
\end{thm}

By using the nonlocal formulas \eqref{puntual01} and \eqref{puntual12}, we show in Propositions \ref{prop:limit zero-n} and \ref{prop:limit 2-n} that
\begin{equation}\label{limit0}
\lim_{\sigma\to0^+}(-\Delta)^{\sigma/2} f(x)=f(x)-\frac{1}{(2\pi)^n}\int_{\Toron} f(y)\,dy,
\end{equation}
and
\begin{equation}\label{limit2}
\lim_{\sigma\to2}(-\Delta)^{\sigma/2} f(x)=-\Delta f(x),
\end{equation}
in the \textit{pointwise sense} for all $x\in\Toron$. Observe the contrast of \eqref{limit0} with the case of the fractional Laplacian on $\Real^n$, where $\lim_{\sigma\to0^+}(-\Delta_{\Real^n})^{\sigma/2}f(x)=f(x)$, see \cite[Proposition~2.5]{Stinga}.
Notice that the identities in \eqref{limit0} and \eqref{limit2} are obvious as limits in $L^2(\Toron)$. Here we prove that the limits actually hold in the \textit{pointwise} sense for a large class of smooth functions. A crucial step is to compute all the constants in the kernel $K^{\sigma/2}(x)$ exactly.

We would like to stress that the semigroup language we adopt here is the most adequate for our purposes. In particular, it allows us to compute all the constants exactly, to study regularity properties in a simple and general way and to have an explicit solution for the extension problem in terms of the underlying semigroup.
See also \cite{Stinga}.

One may think in a fairly naive way that our results could be obtained by a mere
periodization of the results for the fractional Laplacian on
$\Real^n$. In fact there are some initial obstructions with such an idea.
Notice that smooth or $L^2$ functions on the torus cannot be identified with Schwartz class or $L^2$
functions on $\Real^n$. The kernel of the fractional Laplacian on $\Real^n$ is not integrable,
so its ``periodization'' in principle has just a formal meaning. Also, it is not clear that the fractional Laplacian
acting on periodic functions (that should be understood in some suitable sense)
coincides with the fractional Laplacian on the torus as defined with multiple Fourier series.
On the other hand, the results on the torus would
depend on the known results of $\Real^n$ already proved.
In our recent paper \cite{Roncal-Stinga} we addressed all these questions.
The present paper is self-contained and we build up the theory from scratch.
We do not stand on any previous results for the Euclidean case.

The structure of the paper is as follows. In Section \ref{Section:6} we focus on the
generalization of the extension problem to any positive power $L^\gamma$.
The proofs of interior and boundary Harnack's inequalities
for the fractional Laplacian on the torus are contained in 
Section \ref{Section:Harnack}. In Section \ref{sec:regularity} we prove
the regularity estimates of Theorem \ref{Thm:Holder-n}.
Finally, in Section \ref{Section:2} the pointwise formulas of Theorem \ref{thm:puntual}
 and the limits \eqref{limit0} and \eqref{limit2} are shown.
Throughout this paper the letters $c$ and $C$ denote positive constants that may change at each occurrence.

%%%%%%%%%%%%%%%%%%%%%%%%%%%%%%%%%%%%%%%%%%%%%%%%%%%%%%
\section{The general extension problem}\label{Section:6}
%%%%%%%%%%%%%%%%%%%%%%%%%%%%%%%%%%%%%%%%%%%%%%%%%%%%%%

Let $L$ be a nonnegative densely defined self-adjoint operator on some space $L^2(\Omega,d\eta)=L^2(\Omega)$. To fix ideas, we take $\Omega$ to be an open subset of, say, $\Real^n$, $n\geq1$, and $d\eta$ a positive measure on $\Omega$. Nevertheless, we can replace this assumption by a more general one, that is, we can take $L$ to be a normal operator acting on an abstract Hilbert space. Indeed, the main analytic tool
we will use in the proof is the spectral theorem. Hence, important examples like Laplace--Beltrami operators on Riemannian manifolds or Lie groups, divergence form elliptic operators on domains of $\Real^n$, pseudo-differential operators of even order, among others, are covered by Theorem \ref{thm:Extension problem}. See also \cite{Stinga,ST,Stinga-Zhang}.

Under the assumptions above
there is a unique resolution of the identity $E$, supported on the spectrum of $L$, such that
$$\langle Lf,g\rangle=\int_0^\infty\lambda\,dE_{f,g}(\lambda),\quad f\in\Dom(L),~g\in L^2(\Omega).$$
Here $dE_{f,g}(\lambda)$ is a regular Borel complex measure of bounded variation. Throughout this section we use the notation $\langle f,g\rangle=\int_\Omega f(x)g(x)\,d\eta(x)$. The heat-diffusion semigroup generated by $L$ is given by
$$\langle e^{-tL}f,g\rangle=\int_0^\infty e^{-t\lambda}\,dE_{f,g}(\lambda),\quad f,g\in L^2(\Omega),~t\geq0.$$
Fix any $\gamma>0$. The fractional operators $L^\gamma$ are defined by
$$\langle L^\gamma f,g\rangle=\int_0^\infty\lambda^\gamma\,dE_{f,g}(\lambda),\quad f\in\Dom(L^\gamma),~g\in L^2(\Omega),$$
with domain
$$\Dom(L^\gamma)=\Big\{f\in L^2(\Omega):\int_0^\infty\lambda^{2\gamma}\,dE_{f,f}(\lambda)<\infty\Big\}\supset\Dom(L^{[\gamma]}).$$

\begin{proof}[Proof of Theorem \ref{thm:Extension problem}]
As in \cite{ST}, \eqref{eq:Solution Ext Problem} means that $u(\cdot,\tau)\in\Dom(L)$ for any $\tau>0$ and
$$\langle u(\cdot,\tau),g(\cdot)\rangle=\frac{\tau^{2\gamma}}{4^\gamma\Gamma(\gamma)}\int_0^\infty\langle e^{-tL}f,g\rangle e^{-\frac{\tau^2}{4t}}\,\frac{dt}{t^{1+\gamma}},$$
for all $g\in L^2(\Omega)$. It is proved in \cite[Theorem~1.1]{ST} that, when $0<\gamma<1$, the function $u$ given in \eqref{eq:Solution Ext Problem} and interpreted as above, is well defined and satisfies \eqref{extension} and \eqref{eq:limitRecovering}. If we consider $\gamma>1$ then
it is easy to see that $u$ as in \eqref{eq:Solution Ext Problem} is well defined and verifies \eqref{extension}. It remains to prove \eqref{eq:limitRecovering}. We proceed by induction on $[\gamma]$. As we have just said, \eqref{eq:limitRecovering} is valid for $[\gamma]=0$. Assume \eqref{eq:limitRecovering} for $j<\gamma<j+1$, $j\in\N$.
Take $f\in\Dom(L^{\gamma+1})$ and $g\in L^2(\Omega)$. Then
\begin{align*}
    \langle(\tau^{-1}\partial_\tau)u(\cdot,\tau),g(\cdot)\rangle &= \frac{1}{4^{(\gamma+1)}\Gamma(\gamma+1)}\int_0^\infty\langle e^{-tL}f,g\rangle(\tau^{-1}\partial_\tau)\left(\tau^{2(\gamma+1)}e^{-\frac{\tau^2}{4t}}\right)\frac{dt}{t^{1+(\gamma+1)}} \\
     &= \frac{\tau^{2\gamma}}{4^{\gamma+1}\Gamma(\gamma+1)}\int_0^\infty\langle e^{-tL}f,g\rangle e^{-\frac{\tau^2}{4t}}\left(\frac{2(\gamma+1)}{t^{2+\gamma}}-\frac{\tau^2}{2t^{3+\gamma}}\right)dt \\
    &= \frac{-2\tau^{2\gamma}}{4^{\gamma+1}\Gamma(\gamma+1)}\int_0^\infty\langle e^{-tL}f,g\rangle\partial_t\big(e^{-\frac{\tau^2}{4t}}t^{-(1+\gamma)}\big)\,dt \\
    &= \frac{2}{4\gamma}\cdot\frac{\tau^{2\gamma}}{4^\gamma\Gamma(\gamma)}\int_0^\infty\langle e^{-tL}Lf,g\rangle e^{-\frac{\tau^2}{4t}}\frac{dt}{t^{1+\gamma}}=:\frac{1}{2\gamma}\langle v(\cdot,\tau),g(\cdot)\rangle.
\end{align*}
Observe that $v$ is a solution to \eqref{extension} with initial data $v(x,0)=Lf(x)$. By the induction hypothesis,
\begin{align*}
    \lim_{\tau\to0^+}&\langle \tau^{1-2((\gamma+1)-(j+1))}\partial_\tau\left((\tau^{-1}\partial_\tau)^{j+1}u(\cdot,\tau)\right),g(\cdot)\rangle \\
     &= \frac{1}{2\gamma}\lim_{\tau\to0^+}\langle \tau^{1-2(\gamma-j)}\partial_\tau\left((\tau^{-1}\partial_\tau)^jv(\cdot,\tau)\right),g(\cdot)\rangle \\
     &= \frac{1}{2\gamma}\frac{4^{\gamma-j}\Gamma(\gamma-j)}{2(\gamma-j)\Gamma(-(\gamma-j))}\cdot \frac{1}{2^j(\gamma-j)(\gamma-j+1)\cdots(\gamma-1)}\langle L^\gamma(Lf),g\rangle \\
     &= \frac{4^{(\gamma+1)-(j+1)}\Gamma((\gamma+1)-(j+1))}{2((\gamma+1)-(j+1))\Gamma(-((\gamma+1)-(j+1)))} \\
     &\quad \times\frac{1}{2^{j+1}((\gamma+1)-(j+1))((\gamma+1)-(j+1)+1)\cdots((\gamma+1)-1)}\langle L^{\gamma+1}f,g\rangle.
\end{align*}
\end{proof}

\begin{rem}
As in \cite{ST}, more properties of this general extension problem could be established, like Poisson formulas, fundamental solutions, Cauchy--Riemann equations, conjugate Poisson kernels and $L^p$ estimates.
\end{rem}

\begin{rem}
We can push further the class of operators $L$ for which Theorem \ref{thm:Extension problem} is valid. The most general extension result we know holds for generators of \textit{integrated semigroups} and can be found in \cite{Gale}. In particular, Theorem 1.1 of \cite{Gale} applies for generators of semigroups in Banach spaces like $L^p$, or operators with complex spectrum. Examples include fractional powers of $i\Delta$ or $\partial^3_{xxx}$. Moreover, in \cite{Gale}, the case of complex powers $L^\gamma$, $\operatorname{Re}\gamma>0$, is considered.
\end{rem}

%%%%%%%%%%%%%%%%%%%%%%%%%%%%%%%%%%%%%%%%%%%%%%%%%%%%%%
\section{Interior and boundary Harnack's inequalities}\label{Section:Harnack}
%%%%%%%%%%%%%%%%%%%%%%%%%%%%%%%%%%%%%%%%%%%%%%%%%%%%%%

In this section we apply the extension problem to prove interior and boundary Harnack's inequalities for $(-\Delta)^{\sigma/2}$, $0<\sigma<2$.

Consider the fundamental cube
$Q_n:=(-\pi,\pi]^n$.
We identify the $n$-torus with $Q_n$. Through this identification,
$$\int_{\Toron}f(x)\,dx=\int_{Q_n}f(x)\,dx.$$
In this way we have a natural way to identify $L^p$ spaces on $\Toron$, $1\leq p\leq\infty$.

For $\nu=(\nu_1,\ldots,\nu_n)\in\Zn$, set $|\nu|=(\nu_1^2+\cdots+\nu_n^2)^{1/2}$. The heat semigroup generated by $\Delta$ is defined by
\begin{equation}\label{calor}
e^{t\Delta}f(x)\equiv T_tf(x):=\sum_{\nu\in\Zn}e^{-t|\nu|^2}c_\nu(f)e^{i\nu\cdot x},\quad f\in L^2(\Toron),~t\geq0.
\end{equation}
Then $T_tf(x)$ is the solution of the heat equation $\partial_{t}v=\Delta v$ in $\Toron\times(0,\infty)$, with initial condition $v(x,0)=f(x)$ on $\Toron$. We have the convolution formula
\begin{equation}
\label{eq:heat semigroup}
T_tf(x)=\int_{\Toron}W_t(x-y)f(y)\,dy,\quad x\in\Toron,
\end{equation}
where, for $x\in\Toron$ and $t>0$, the heat kernel on $\Toron$ is given by
\begin{equation}\label{eq:Nucleo calor}
W_t(x)= \frac{1}{(2\pi)^n}\sum_{\nu\in\Zn}e^{-t|\nu|^2}e^{i\nu\cdot x}
=\frac{1}{(4\pi t)^{n/2}}\sum_{\nu\in\Zn}e^{-\frac{|x-2\pi\nu|^2}{4t}}.
\end{equation}

Let us see first that the extension problem for the fractional Laplacian on the torus admits a classical solution. We show this by using the classical Fourier method.

Take $f\in\Dom(-\Delta)$. We first claim that a solution $u:\Toron\times[0,\infty)\to\Real$ to the extension problem \eqref{extension} for $f$ can be written as
\begin{equation}\label{u}
\begin{aligned}
    u(x,\tau) &= \frac{\tau^{\sigma}}{4^{\sigma/2}\Gamma(\sigma/2)}\int_0^\infty e^{t\Delta}f(x)e^{-\frac{\tau^2}{4t}}\,\frac{dt}{t^{1+\sigma/2}} \\
     &= \frac{\tau^{\sigma}}{4^{\sigma/2}\Gamma(\sigma/2)} \sum_{\nu\in\mathbb{Z}^n}c_{\nu}(f)e^{i\nu\cdot x}\int_0^{\infty}e^{-t|\nu|^2}e^{-\frac{\tau^2}{4t}}\,\frac{dt}{t^{1+\sigma/2}}.
\end{aligned}
\end{equation}
Indeed, to see \eqref{u}, observe that the series in \eqref{calor} converges uniformly in $x$, because
\begin{align*}
\sum_{\nu\in\Z^n}e^{-t|\nu|^2}|c_{\nu}(f)|&\le \|f\|_{L^2(\Toron)}\left(\sum_{\nu\in\mathbb{Z}^n}e^{-2t|\nu|^2}\right)^{1/2}
\le C\|f\|_{L^2(\Toron)}\left(\sum_{k\ge0}k^ne^{-2tk^2}\right)^{1/2}\\
&\le C \|f\|_{L^2(\Toron)}t^{-n/2}\left(\sum_{k\ge 0}e^{-ctk^2}\right)^{1/2}\le C \|f\|_{L^2(\Toron)}t^{-n/2-1/4} .
\end{align*}
Since
$$\int_0^\infty\sum_{\nu\in\Z^n}\left|e^{-t|\nu|^2}c_{\nu}(f)e^{-\frac{\tau^2}{4t}}\right|\frac{dt}{t^{1+\sigma/2}}\leq C_f\int_0^\infty e^{-\frac{\tau^2}{4t}}t^{-n/2-1/4}\,\frac{dt}{t^{1+\sigma/2}}<\infty,$$
Fubini's theorem can be applied to obtain the second equality of \eqref{u}.

Secondly, $u(\cdot,\tau)\in C^2(\Toron)$, for every $\tau>0$. Indeed, for $h>0$ and $e_j$ the $j$-th coordinate unit vector in $\Zn$, $j=1,\ldots,n$,
$$\frac{u(x+he_j,\tau)-u(x,\tau)}{h}=\frac{\tau^{\sigma}}{4^{\sigma/2}\Gamma(\sigma/2)} \sum_{\nu\in\mathbb{Z}^n}c_{\nu}(f)\frac{e^{i\nu\cdot(x+he_j)}-e^{i\nu\cdot x}}{h}\int_0^{\infty} e^{-t|\nu|^2}e^{-\frac{\tau^2}{4t}}\,\frac{dt}{t^{1+\sigma/2}}.$$
As
$$\sum_{\nu\in\Z^n}|c_{\nu}(f)|\int_0^\infty |\nu|e^{-t|\nu|^2}e^{-\frac{\tau^2}{4t}}\,\frac{dt}{t^{1+\sigma/2}}\leq \sum_{\nu\in\Z^n}|c_{\nu}(f)|\int_0^\infty e^{-\frac{t|\nu|^2}{2}}e^{-\frac{\tau^2}{4t}}\,\frac{dt}{t^{1+(\sigma+1)/2}}<\infty,$$
by dominated convergence, $u$ is differentiable with respect to $x$ and the derivative can be taken inside the series in \eqref{u}.  A similar argument for $\nabla_xu$ shows that $u(\cdot,\tau)\in C^2(\Toron)$.

Finally, let us see that, for $\mu_\sigma$ the constant in Theorem \ref{thm:Extension problem},
\begin{equation}\label{lim}
\|\tau^{1-\sigma}u_\tau(x,\tau)\|_{L^2(\Toron)}\to \mu_{\sigma/2}\|(-\Delta)^{\sigma/2}f\|_{L^2(\Toron)},\quad\hbox{as}~\tau\to0^+.
\end{equation}
To prove \eqref{lim} we use \eqref{u}, the cancelation
$$\int_0^\infty e^{-\frac{\tau^2}{4t}}\left(\sigma-\frac{\tau^2}{2t}\right)\frac{dt}{t^{1+\sigma/2}}=0,\quad \tau>0,$$
and dominated convergence, as follows:
\begin{align*}
    \|\tau^{1-\sigma}u_\tau(x,\tau)\|_{L^2(\Toron)}^2 &= \sum_{\nu\in\Z^n}|c_{\nu}(f)|^2\left(\frac{1}{4^{\sigma/2}\Gamma(\sigma/2)}\int_0^\infty e^{-t|\nu|^2}e^{-\frac{\tau^2}{4t}}\left[\sigma-\frac{\tau^2}{2t}\right]\frac{dt}{t^{1+\sigma/2}}\right)^2 \\
     &= \sum_{\nu\in\Z^n}|c_{\nu}(f)|^2\left(\frac{1}{4^{\sigma/2}\Gamma(\sigma/2)}\int_0^\infty (e^{-t|\nu|^2}-1)e^{-\frac{\tau^2}{4t}}\left[\sigma-\frac{\tau^2}{2t}\right]\frac{dt}{t^{1+\sigma/2}}\right)^2 \\
    & \underset{\tau\to0}{\longrightarrow} \sum_{\nu\in\Z^n}|c_{\nu}(f)|^2\left(\frac{\sigma}{4^{\sigma/2}\Gamma(\sigma/2)}\int_0^\infty (e^{-t|\nu|^2}-1)\frac{dt}{t^{1+\sigma/2}}\right)^2 \\
    &= C_{\sigma/2}^2\sum_{\nu\in\Z^n}|\nu|^{2\sigma}|c_{\nu}(f)|^2=C_{\sigma/2}^2\|(-\Delta)^{\sigma/2}f\|_{L^2(\Toron)}^2.
\end{align*}
Let us also note that if $f\geq0$ then $u\geq0$.

\begin{proof}[Proof of Theorem \ref{Thm:interior Harnack}]
Set $\tilde{u}(x,\tau)=u(x,|\tau|)$, $x\in\Toron$, $\tau\in\Real$, where $u$ is as in \eqref{u}. Let us verify that $\tilde{u}$ is a nonnegative weak solution of
\begin{equation}\label{degenerada}
\dive(|\tau|^{1-\sigma}\nabla\tilde{u})=0,\quad\hbox{in}~\mathcal{C}:=\mathcal{O}\times(-R,R),~R>0.
\end{equation}
Indeed, for any $\varphi\in C^\infty_c(\mathcal{O}\times(-R,R))$ and $\delta>0$, by applying the divergence theorem,
\begin{align*}
    \int_{\mathcal{C}}|\tau|^{1-\sigma}\nabla\tilde{u}\cdot\nabla\varphi\,dx\,d\tau &= \int_{\mathcal{C}\cap\{|\tau|\geq\delta\}}\dive(|\tau|^{1-\sigma}\varphi\nabla\tilde{u})\,dx\,d\tau+ \int_{\mathcal{C}\cap\{|\tau|<\delta\}}|\tau|^{1-\sigma}\nabla\tilde{u}\cdot\nabla\varphi\,dx\,d\tau \\
     &= \int_{\mathcal{O}}\varphi(x,\delta)\delta^{1-\sigma}\tilde{u}_\tau(x,\delta)\,dx+ \int_{\mathcal{O}\times(-\delta,\delta)}|\tau|^{1-\sigma}\nabla\tilde{u}\cdot\nabla\varphi
     \,dx\,d\tau.
\end{align*}
The first term above is bounded by $\|\varphi\|_{L^\infty((-R,R);L^2(\mathcal{O}))}
\|\delta^{1-\sigma}\tilde{u}_\tau(x,\delta)\|_{L^2(\mathcal{O})}$, which tends to 0 as $\delta\to0^+$ because of \eqref{lim}. As for the second term, we write $\nabla\tilde{u}\cdot\nabla\varphi=\sum_{k=1}^n\partial_{x_k}\tilde{u}
\partial_{x_k}\varphi+\partial_\tau\tilde{u}\partial_\tau\varphi$, so the integral splits into $\sum_{k=1}^nJ_k+J$. To deal with $J_k$, we see that, as $f\in\Dom(-\Delta)$, the derivative $\partial_{x_k}f\in L^2(\Toron)$. Next we check that $\|\partial_{x_k}u(x,\tau)\|_{L^2(\Toron)}\to\|\partial_{x_k}f\|_{L^2(\Toron)}$, as $\tau\to0^+$. This is proved by using \eqref{u}, a change of variables and dominated convergence:
\begin{align*}
    \|\partial_{x_k}u(x,\tau)\|^2_{L^2(\Toron)} &= \sum_{\nu\in\mathbb{Z}^n}\nu_k^2|c_{\nu}(f)|^2\left(\frac{\tau^{\sigma}}{4^{\sigma/2}
    \Gamma(\sigma/2)}\int_0^{\infty} e^{-t|\nu|^2}e^{-\frac{\tau^2}{4t}}\,\frac{dt}{t^{1+\sigma/2}}\right)^2 \\
     &=  \sum_{\nu\in\mathbb{Z}^n}\nu_k^2|c_{\nu}(f)|^2 \left(\frac{1}{\Gamma(\sigma/2)}\int_0^{\infty}e^{-\frac{\tau^2}{4s}|\nu|^2}e^{-s}
     \,\frac{ds}{s^{1-\sigma/2}}\right)^2 \\
     &\longrightarrow \sum_{\nu\in\Z^n}\nu_k^2|c_{\nu}(f)|^2=\|\partial_{x_k}f\|^2_{L^2(\Toron)}
     ,\quad\hbox{as}~\tau\to0^+.
\end{align*}
Thus there exists a constant $C(f)$ such that $\|\partial_{x_k}\tilde{u}(x,\tau)\|_{L^2(\Toron)}<C(f)$ for all sufficiently small $\tau$. Hence, $|J_k|\leq C(f,\varphi)\delta^{2-\sigma}\to0$, as $\delta\to0$. In order to estimate $J$, by using \eqref{lim}, there exists $C$ such that $\||\tau|^{1-\sigma}\tilde{u}_\tau(x,\tau)\|_{L^2(\mathcal{O})}\leq    C$ for all sufficiently small $\tau$. Therefore,
$$|J|\leq \int_{-\delta}^\delta\||\tau|^{1-\sigma}\tilde{u}_\tau(x,\tau)\|_{L^2(\mathcal{O})}\|\partial_\tau\varphi\|_{L^2(\mathcal{O})}\,d\tau\leq C_\varphi\delta\to0,\quad\delta\to0.$$
Hence, $\tilde{u}$ is a nonnegative weak solution to \eqref{degenerada} in $\mathcal{C}=\mathcal{O}\times(-R,R)$. The equation in \eqref{degenerada} is a degenerate elliptic equation with $A_2$ weight $|\tau|^{1-\sigma}$. By applying Harnack's inequality of \cite[Theorems~2.3.8~and~2.3.12]{Fabes-Kenig-Serapioni} to $\tilde{u}$, we get the conclusions for $f$.
\end{proof}

\begin{rem}
In view of Theorems \ref{thm:Extension problem} and \ref{Thm:interior Harnack}, a natural question that arises is how to apply the extension problem to get interior Harnack's inequality for $(-\Delta)^{\sigma/2}$ with $\sigma>2$. First, we must note that some extra hypotheses on $f$ should be added. Indeed, Harnack's inequality for the biharmonic operator $(-\Delta_{\Real^n})^2$ holds if we also know that $(-\Delta_{\Real^n})f\geq0$, the counterexample being $f(x)=x_1^2$ in $B_2(0)$.
Secondly, if $\sigma>2$, the degeneracy weight in the extension equation $\dive(\tau^{1-\sigma}\nabla u)=0$ does not belong to any $A_p$ class and, up to our knowledge, Harnack's inequality in this case is not known.
\end{rem}

\begin{proof}[Proof of Theorem \ref{Thm:boundary Harnack}]
We take $r=1/2$, the proof for a general $r>0$ is the same. Let $\tilde{u}_j(x,\tau)=u_j(x,|\tau|)$, where $u_j$ is the extension of $f_j$ as in Theorem \ref{thm:Extension problem}. As in the proof of Theorem \ref{Thm:interior Harnack}, $\tilde{u}_j$ satisfies the degenerate elliptic equation $\dive(|\tau|^{1-\sigma}\nabla\tilde{u}_j)=0$ in the weak sense in $\mathcal{O}\times\Real$. Moreover, $\tilde{u}_j$ verifies the equation in the weak sense in $(\Toron\times\Real)\setminus\set{(x,0):x\in \mathcal{O}^c}$ and $\tilde{u}_j(x,0)=f_j(x)$ for all $x\in\mathcal{O}^c$. Let us take a bilipschitz map $\Psi:\Toron\rightarrow\Toron$ that flattens $\partial\mathcal{O}\cap B_{1/2}(x_0)$, that is, such that $\Psi(x_0)=0$ and $\Psi(\mathcal{O})\cap B_{1/2}(0)=\set{x_1>0}\cap B_{1/2}(0)$. We can extend this map to $\mathcal{O}\times \Real$ as a constant in the variable $\tau$. Then, the functions $v_j=\tilde{u}_j\circ \Psi^{-1}$ are also solutions of an equation in the same class, namely, $\dive(|\tau|^{1-\sigma}\mathcal{B}\nabla v_j)=0$ in $(\Toron\times \Real)\setminus\{(x,0): x\in\Psi(\mathcal{O})^c\}$. Indeed, for any test function $\varphi\in C_c^{\infty}( \mathcal{O})$,
$$
\int(\nabla_x \varphi)^T\nabla_x \tilde{u}_j\,dx=\int(\nabla\psi)^T(D \Psi)^T(D\Psi)\nabla v_j\,\frac{dz}{\det D\Psi},
$$
where $\psi(z)=\varphi(\Psi^{-1}(z))$ and $D\Psi$ denotes the Jacobian matrix of the transformation. Then matrix $\mathcal{B}$ is $\tfrac{(D\Psi)^T(D\Psi)}{\det D\Psi}$, which is uniformly elliptic because $\Psi$ is a bilipschitz transformation. See Figure \ref{figure1}.

\begin{figure}[here]
\includegraphics[width=0.7\textwidth]{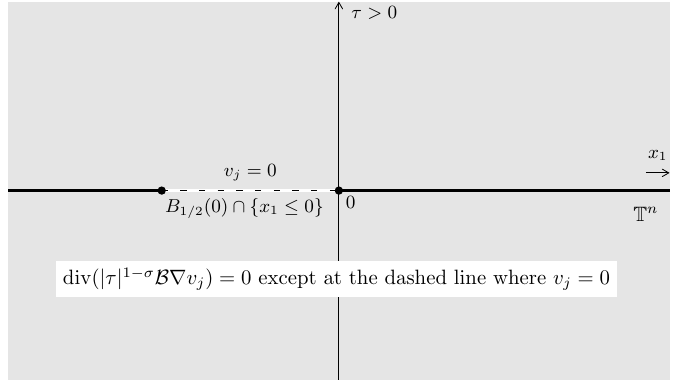}
\caption{The equation for $v_j$}
\label{figure1}
\end{figure}

For $(x,\tau)=(x_1,\ldots,x_n,\tau)\in\Toron\times\Real$ we can write, by using polar coordinates, $(x_1,x_2,\ldots,x_n,\tau)=(\rho\cos\theta,x_2,\ldots,x_n,\rho\sin\theta), \rho>0,~ \theta\in(-\pi,\pi)$.
Consider now the map
$$
\Phi:(\Toron\times\Real)\setminus\set{(x,0):x_1\leq0}\rightarrow (\Toron\times\Real)\cap\set{(x,\tau):x_1>0},
$$
defined to be constant in the variables $x_2,\ldots,x_n$ and such that
$$(\rho\cos\theta,x_2,\ldots,x_n,\rho\sin\theta)\overset{\Phi}{\longmapsto}
(\rho\cos\tfrac{\theta}{2},x_2,\ldots,x_n,
\rho\sin\tfrac{\theta}{2})=:(X_1,x_2,\ldots,x_n,Y)=:(X,Y).
$$
We see that
$$
D\Phi=\left(\begin{array}{ccccc}
\partial_{\rho}X_1&\partial_{x_2}X_1 &\cdots&\partial_{x_n}X_1&\partial_{\theta}X_1\\
\partial_{\rho}x_2& \partial_{x_2}x_2&&&\partial_{\theta}x_2\\
\vdots& &\ddots&&\vdots\\
\partial_{\rho}x_n& &&\partial_{x_n}x_n&\partial_{\theta}x_n\\
\partial_{\rho}Y& \partial_{x_2}Y&\cdots&\partial_{x_n}Y&\partial_{\theta}Y
\end{array}\right)=\left(\begin{array}{ccccc}
\cos\tfrac{\theta}{2}&0 &\cdots&0&-\rho/2\sin\tfrac{\theta}{2}\\
0&1 &&&0\\
\vdots& &\ddots&&\vdots\\
0& &&1&0\\
\sin\tfrac{\theta}{2}&0 &\cdots&0&\rho/2\cos\tfrac{\theta}{2}
\end{array}
\right).
$$
Therefore, if we denote by $I_n$ the identity matrix of size $n\times n$,
$\displaystyle
(D\Phi)^T(D\Phi)=\left(\begin{array}{cc}
I_n& 0 \\
0& \rho^2/4
\end{array}
\right).
$
Then, the singular values of $D\Phi$ are equal to one, except for the one in the direction of $\partial_{\theta}$, that is $\rho/2$. Also, $\det D\Phi=\rho/2=\sqrt{X_1^2+Y^2}/2$. Define $w_j=v_j\circ\Phi^{-1}$, in $\Toron\times \Real\setminus\set{(x,\tau):x_1\leq0}$. Then $w_j$ is a nonnegative weak solution of $\dive(\mathcal{C}\nabla w_j)=0$, in $B_{1/2}(0)\cap\{(x,\tau):x_1>0\}$. Here $\mathcal{C}=\frac{(D\Phi)^T\mathcal{B}(D\Phi)}{\det D\Phi}\,m(X,Y)$, and $m(X,Y)=\big|\frac{2X_1Y}{\sqrt{X_1^2+Y^2}}\big|^{1-\sigma}$. See Figure \ref{figure2}.

\begin{figure}[here]
\includegraphics[width=0.7\textwidth]{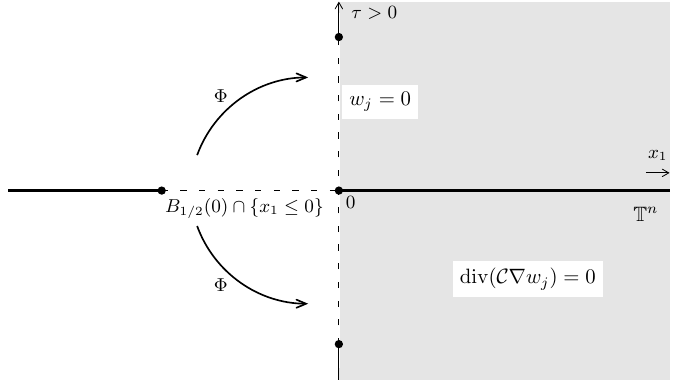}
\caption{The equation for $w_j$}
\label{figure2}
\end{figure}

The equation for $w_j$ above is a degenerate elliptic equation with $A_2$ weight. Therefore, we can apply the theory in \cite[Section 2]{Fabes-Kenig-Jerison} to get boundary Harnack's inequality
$$\sup_{B_{1/4}(0)\cap\{(x,\tau):x_1>0\}}\frac{w_1}{w_2}\leq C\inf_{B_{1/4}(0)\cap\{(x,\tau):x_1>0\}}\frac{w_1}{w_2},$$
and the H\"older continuity of $w_1/w_2$. Go back to $\tilde{u}_1$ and $\tilde{u}_2$ and restrict them to $\tau=0$ for the conclusion.
\end{proof}

%%%%%%%%%%%%%%%%%%%%%%%%%%%%%%%%%%%%%%%%%%%%%%%%%%%%%%
\section{Regularity estimates in H\"older spaces}\label{sec:regularity}
%%%%%%%%%%%%%%%%%%%%%%%%%%%%%%%%%%%%%%%%%%%%%%%%%%%%%%

\begin{defn}\label{Lambda beta calor}
Let $\beta>0$ and $k=[\beta/2]+1$. We define
$$
\Lambda_\beta(\Toron):=\set{f\in L^\infty(\Toron):\|t^{k}\partial_t^kT_tf(x)\|_{L^\infty(\Toron)}\leq At^{\beta/2},~t>0}.$$
We denote by $[f]_{\Lambda_\beta(\Toron)}$ the least constant $A$ appearing above. The norm in $\Lambda_\beta(\Toron)$ is given by
$$\|f\|_{\Lambda_\beta(\Toron)}=\|f\|_{L^\infty(\Toron)}+[f]_{\Lambda_\beta(\Toron)}.$$
\end{defn}

\begin{defn}[H\"older spaces on $\Toron$]
\label{def:Holder}
Let $0<\alpha\le1$ and $k\in\mathbb{N}_0$. A real function $f$ defined on $\Toron$ belongs to the H\"{o}lder space $C^{k,\alpha}(\Toron)$, if $f\in C^k(\Toron)$ and
$$[D^{\gamma}f]_{C^\alpha(\Toron)}:=\sup_{\begin{subarray}{c}x,y\in\Toron\\x\neq y\end{subarray}}\frac{|D^{\gamma}f(x)-D^{\gamma}f(y)|}{\operatorname{dist}(x,y)^{\alpha}}<\infty,
$$
for each multi-index $\gamma\in \N_0^n$ such that $|\gamma|=k$.
Here $\operatorname{dist}(x,y)$ is the geodesic distance from $x$ to $y$ on $\Toron$.
The norm in $C^{k,\alpha}(\Toron)$ is defined as usual.
\end{defn}

In the following result we relate the spaces $\Lambda_\beta(\Toron)$ with the H\"older spaces $C^{k,\alpha}(\Toron)$ and the Zygmund class $\Lambda_\ast$.

\begin{prop}\label{Prop:lambdas-n}
Let $\beta>0$.
\begin{enumerate}[(i)]
    \item Let $f\in L^\infty(\Toron)$ and $k,\ell>\beta/2$ be two integers. Then the two conditions
        $$\|t^k\partial_t^kT_tf\|_{L^\infty(\Toron)}\leq A_kt^{\beta/2},\quad \|t^\ell\partial_t^\ell T_tf\|_{L^\infty(\Toron)}\leq A_\ell t^{\beta/2},\quad\hbox{for}~t>0,$$
        are equivalent. The least constants $A_k$ and $A_\ell$ that satisfy the inequalities above are comparable.
    \item Let $0<\beta'<\beta$ and $f\in\Lambda_{\beta}(\Toron)$. Then $f\in\Lambda_{\beta'}(\Toron)$
    and
    $$\|f\|_{\Lambda_{\beta'}(\Toron)}\leq \|f\|_{\Lambda_{\beta}(\Toron)}.$$
    \item If $0<\beta<1$ then $\Lambda_\beta(\Toron)=C^{0,\beta}(\Toron)$, with equivalent norms.
    \item We have $\Lambda_1(\Toron)=\Lambda_\ast(\Toron)$, the Zygmund class defined as the set of continuous functions $f$ on $\Toron$ such that $|f(x+h)+f(x-h)-2f(x)|\leq C|h|$, for all $x\in\Toron$ and $h\in\Real^n$. The quantity
        $$\|f\|_{\Lambda_\ast(\Toron)}=\|f\|_{L^\infty(\Toron)}+\sup_{|h|>0}
        \frac{\|f(x+h)+f(x-h)-2f(x)\|_{L^\infty(\Toron)}}{|h|},$$
        is equivalent to $\|f\|_{\Lambda_1(\Toron)}$. Consequently, $C^{0,1}(\Toron)\varsubsetneq\Lambda_1(\Toron)$ and $\|f\|_{\Lambda_1(\Toron)}\leq C\|f\|_{C^{0,1}(\Toron)}$.
    \item If $1<\beta<2$ then $f\in \Lambda_\beta(\Toron)$ if and only if $f$ is differentiable and $\nabla f\in\Lambda_{\beta-1}(\Toron)$. Moreover, $\|f\|_{\Lambda_\beta(\Toron)}$ is equivalent to $\|f\|_{L^\infty(\Toron)}+\|\nabla f\|_{\Lambda_{\beta-1}(\Toron)}$. Similarly, $\Lambda_2(\Toron)=\set{f:\nabla f\in\Lambda_\ast(\Toron)}$.
    \item If $\beta$ is not an integer, then $\Lambda_\beta(\Toron)=C^{[\beta],\beta-[\beta]}(\Toron)$ with equivalent norms. Similarly, for $\beta=j\in\N$, we have
    $$\Lambda_j(\Toron)=\set{f:D^kf\in\Lambda_\ast,\,\hbox{for all}~k=(k_1,\ldots,k_n)\in\mathbb{N}_0\,\hbox{such that}~k_1+\cdots+k_n=j}.$$
\end{enumerate}
\end{prop}

\begin{proof}
Note that \textit{(vi)} follows from \textit{(iii)}--\textit{(v)} by iteration. Item \textit{(iii)} of this proposition in the case when $\Toron$ and $T_t$ are replaced by $\Real^n$ and the heat semigroup on $\Real^n$ is already known. Though we think \textit{(iii)} belongs to the folklore, we provide a proof here for completeness. 

\textit{(i)} This is consequence of the semigroup property of $T_t$ and the following simple estimate
\begin{equation}\label{simple}
|\partial_t^kW_t(x)|\leq C_{n,k}\sum_{\nu\in\Zn}\frac{e^{-c_k|x-2\pi\nu|^2/t}}{t^{n/2+k}}.
\end{equation}
Indeed, assume first that $k>\ell$. Then, with a computation as in \eqref{calor at 1},
\begin{align*}
    |t^k\partial_t^kT_tf(x)| &=
    |t^k\partial_t^{k-\ell}T_{t/2}(\partial_t^\ell T_{t/2}f)(x)|=t^k\abs{\int_{\Toron}\partial_t^{k-\ell}W_{t/2}(x-y)
    \partial_t^\ell T_{t/2}f(y)\,dy} \\
     &\leq Ct^{k+\beta/2-\ell}\int_{Q_n}
     \sum_{\nu\in\Zn}\frac{e^{-c|x-y-2\pi\nu|^2/t}}{t^{n/2+k-\ell}}\,dy= Ct^{\beta/2}.
\end{align*}
Suppose now that $k<\ell$. Let $m$ be the integer for which $k<\ell=k+m$. Then,
\begin{align*}
    |t^k\partial_t^k T_tf(x)| &\leq
    t^k\int_t^\infty\int_{s_1}^\infty\cdots\int_{s_{m-1}}^\infty\abs{\partial_{s_m}^{m+k}T_{s_m}f(x)}\,ds_{m}\,\cdots\,ds_2\,ds_1 \\
    &\leq Ct^k\int_t^\infty\int_{s_1}^\infty\cdots\int_{s_{m-1}}^\infty
    s_m^{\beta/2-(m+k)}ds_{m}\,\cdots\,ds_2\,ds_1= Ct^{\beta/2}.
\end{align*}

The conclusion in \textit{(ii)} follows from \textit{(i)} and the observation that the estimate $\|t^{k}\partial_t^kT_tf(x)\|_{L^\infty(\Toron)}\leq At^{\beta/2}$ is relevant only for $t$ near zero (for $t$ large we have a stronger inequality that follows from \eqref{simple}).

For \textit{(iii)}, suppose that $f\in\Lambda_\beta(\Toron)$. 
It is enough to show that for almost every $x$ we have the estimate $|f(x+h)-f(x)|\leq C|h|^\alpha$,
for all $h$. Indeed, a usual regularization argument (see \cite[p.~141]{Stein-Singular}) gives the continuity of $f$ and thus $f\in C^{0,\alpha}(\Toron)$.
For almost every $x$ we can write
$$|f(x)-f(x+h)|\leq|f(x)-T_{|h|^2}f(x)|+|T_{|h|^2}f(x)-T_{|h|^2}f(x+h)|
+|T_{|h|^2}f(x+h)-f(x+h)|.$$
Then, since $T_0f(x)=f(x)$, the first term above is bounded by
$$\int_0^{|h|^2}|\partial_sT_sf(x)|\,ds\leq \|f\|_{\Lambda_\beta(\Toron)}\int_0^{|h|^2}s^{-1+\beta/2}\,ds=C\|f\|_{\Lambda_\beta(\Toron)}|h|^{\beta}.$$
The third term is estimated analogously. For the second term, we need to show that
\begin{equation}\label{gradiente}
\|\nabla T_tf(\cdot)\|_{L^\infty(\Toron)}\leq C\|f\|_{\Lambda_\beta(\Toron)}t^{\beta/2-1/2}.
\end{equation}
We see that if \eqref{gradiente} is true then the second term is bounded by
$$ C \sup_\xi|\nabla T_{|h|^2}f(\xi)||h|
\leq C\|f\|_{\Lambda_\beta(\Toron)}(|h|^2)^{\beta/2-1/2}|h|=C\|f\|_{\Lambda_\beta(\Toron)}|h|^\beta.$$
In order to prove \eqref{gradiente}, observe first that the simple estimate
$$|\nabla W_t(x)|\leq C\sum_{\nu\in\Zn}\frac{e^{-c|x-2\pi\nu|^2/t}}{t^{n/2+1/2}},$$
implies
\begin{equation}\label{norma1 grad}
\|\nabla W_t\|_{L^1(\Toron)}\leq Ct^{-1/2}.
\end{equation}
Since $W_t=W_{t_1}\ast W_{t_2}$, $t=t_1+t_2$, $t_j>0$, we get $T_tf(x)=W_{t_1}\ast T_{t_2}f(x)$. Taking $t_1=t_2=t/2$, we have $\partial_t \nabla T_tf=\nabla W_{t/2}\ast (\partial_s T_sf)|_{s=t/2}$. In this way, \eqref{norma1 grad} and the assumption $\|\partial_tT_tf\|_{L^\infty(\Toron)}\leq \|f\|_{\Lambda_\beta(\Toron)}t^{\beta/2-1}$ give
\begin{equation}\label{a lo Stein}
\|\partial_t\nabla T_tf\|_{L^\infty(\Toron)}\leq C\|f\|_{\Lambda_\beta(\Toron)}t^{\beta/2-3/2}.
\end{equation}
Nevertheless, $\|\nabla T_tf\|_{L^\infty(\Toron)}=\|\nabla W_t\ast f\|_{L^\infty(\Toron)}\leq \|\nabla W_t\|_{L^1(\Toron)}\|f\|_{L^\infty(\Toron)} \leq Ct^{-1/2}\|f\|_{L^\infty(\Toron)}$.
Therefore $\nabla T_t f\rightarrow 0$ as $t\rightarrow \infty$, thus we can write $\nabla T_tf=-\int_t^{\infty}\partial_s\nabla T_sf\,ds$. From here, and in view of \eqref{a lo Stein}, we obtain \eqref{gradiente}.

Next let us assume that $f\in C^{0,\beta}(\Toron)$. Clearly, from \eqref{calor at 1}, we have $\int_{\Toron}\partial_t W_t(x)\,dx=0$. Thus, using \eqref{simple},
\begin{align*}
\|\partial_tT_tf(x)\|_{L^\infty(\Toron)}&\leq C \int_{\Toron}|\partial_tW_t(h)||f(x+h)-f(x)|\,dh\\
&\le C\int_{Q_n}\sum_{\nu\in\Zn}\frac{e^{-c|h-2\pi\nu|^2/t}}{t^{n/2+1}}|f(x+h-2\pi\nu)-f(x)|\,dh\\
&\le C\|f\|_{C^{0,\beta}(\Toron)}\int_{Q_n}\sum_{\nu\in\Zn}\frac{e^{-c|h-2\pi\nu|^2/t}}{t^{n/2+1}}|h-2\pi\nu|^{\beta}\,dh\\
&\le C \|f\|_{C^{0,\beta}(\Toron)} \int_{Q_n}\sum_{\nu\in\Zn}\frac{e^{-\tilde{c}|h-2\pi\nu|^2/t}}{t^{n/2+1}}t^{\beta/2}\,dh= C\|f\|_{C^{0,\beta}(\Toron)}t^{\beta/2-1}.
\end{align*}

For the proof of (\textit{iv}) we need the trivial facts that $\int_{\Toron}\partial_{tt} W_t(x)\,dx=0$ and $\partial_{tt} W_t(x)=\partial_{tt} W_t(-x)$. With these, if $f\in \Lambda_*(\Toron)$, we see that
$$
\partial_{tt}T_tf(x)=\frac12\int_{\Toron}\partial_{tt} W_t(h)(f(x+h)+f(x-h)-2f(x))\,dh,
$$
and so, by \eqref{simple},
\begin{align*}
\|\partial_{tt}T_tf(x)\|_{L^\infty(\Toron)}
&\le C\int_{Q_n}\sum_{\nu\in\Zn}\frac{e^{-c|h-2\pi\nu|^2/t}}{t^{n/2+2}}|f(x+h-2\pi\nu)+f(x-h+2\pi\nu)-f(x)|\,dh\\
&\le C\|f\|_{\Lambda_*(\Toron)}\int_{Q_n}\sum_{\nu\in\Zn}\frac{e^{-c|h-2\pi\nu|^2/t}}{t^{n/2+2}}|h-2\pi\nu|\,dh\\
&\le C \|f\|_{\Lambda_*(\Toron)} \int_{Q_n}\sum_{\nu\in\Zn}\frac{e^{-\tilde{c}|h-2\pi\nu|^2/t}}{t^{n/2+2}}t^{1/2}\,dh= C\|f\|_{\Lambda_*(\Toron)}t^{1/2-2}.
\end{align*}

In order to prove that $\Lambda_1(\Toron)\subset\Lambda_\ast(\Toron)$  in \textit{(iv)}, one can follow the ideas in \cite[Chapter V, Section 4.3, Proposition 8]{Stein-Singular}, by taking the heat semigroup in $\Toron$ instead of the Poisson in $\Real^n$. Let us sketch here the main steps. First we observe that, for a function $F$ with two continuous derivatives,
\begin{equation}\label{diferencias segundas}
\|F(x+h)+F(x-h)-2F(x)\|_{L^\infty(\Toron)}\le C|h|^2\|D^2F\|_{L^\infty(\Toron)}.
\end{equation}
By the inclusion $\Lambda_1(\Toron)\subset\Lambda_{\alpha}(\Toron)$, for $\alpha<1$, proved
in \textit{(ii)}, we have $\|\partial_tT_tf(x)\|_{L^\infty(\Toron)}\le C t^{\alpha/2-1}$, so, in particular,
$t\|\partial_tT_tf(x)\|_{L^\infty(\Toron)}\rightarrow 0$, as $t\rightarrow 0$. Hence, we can write
\begin{equation}\label{identity}
f(x)=T_0f(x)=\int_0^t s\partial_{ss}T_{s}f(x)\,ds-t\partial_tT_tf(x)+T_tf(x).
\end{equation}
However, by following an argument similar to the one in the proof of (\textit{iii}), we can prove that 
$\|\partial_{tt}T_tf(x)\|_{L^\infty(\Toron)}\le C\|f\|_{\Lambda_1(\Toron)}t^{1/2-2}$
 implies the estimates $\|D^2 T_tf\|_{L^\infty(\Toron)}\leq C\|f\|_{\Lambda_1(\Toron)}t^{-1/2}$, and $\|\partial_tD^2 T_tf\|_{L^\infty(\Toron)}\leq C\|f\|_{\Lambda_1(\Toron)}t^{-3/2}$. Therefore, by plugging \eqref{identity} into \eqref{diferencias segundas},
$$
\|f(x+h)+f(x-h)-2f(x)\|_{L^\infty(\Toron)}\le C\|f\|_{\Lambda_1(\Toron)}\left[\int_0^t ss^{1/2-2}\,ds+\big(t\cdot t^{-3/2}+t^{-1/2}\big)|h|^2\right].
$$
Take $t=|h|^2$ and the result follows.

Finally, \textit{(v)} follows analogous ideas from \cite[Chapter V, Section 4.3, Proposition 9]{Stein-Singular}. Indeed, take $f\in\Lambda_{\beta}(\Toron)$. By using the same technique as in items (\textit{iii}) and (\textit{iv}) we have that $\|\partial_{ttt}\nabla T_tf\|_{L^\infty(\Toron)}\le Ct^{\beta/2-3}$ implies the estimate $\|\partial_{tt}\nabla T_tf\|_{L^\infty(\Toron)}\le Ct^{\beta/2-5/2}$. With this, we can prove that $\nabla f\in L^{\infty}(\Toron)$ and $f\in \Lambda_{\alpha-1}(\Toron)$  with the equivalence of the norms, just following the same steps as in \cite[Page 148]{Stein-Singular}. The proof of the converse implication works in the same way. We omit further details.
\end{proof}

It is easy to check that for any $\lambda>0$ and $0<\sigma<2$ we have the integral identity
$$\lambda^{\sigma/2}=\frac{1}{\Gamma(-\sigma/2)}
\int_0^\infty(e^{-t\lambda}-1)\,\frac{dt}{t^{1+\sigma/2}}.$$
Plugging this into \eqref{series toro} with $\lambda=|\nu|^2$ and interchanging the summation with the integration, we get \eqref{formula semigrupo calor} for $f\in C^\infty(\Toron)$.
We take formula \eqref{formula semigrupo calor} as the definition of $(-\Delta)^{\sigma/2}f$
when $f$ is a function in the class $\Lambda_\beta(\Toron)$. In fact, this is the correct definition,
see Section \ref{Section:2}.
Taking into account Proposition \ref{Prop:lambdas-n}, we readily see that Theorem \ref{Thm:Holder-n} is a direct corollary of
the following result.

\begin{thm}\label{Thm:Lambdas-n}
Let $\beta>0$ and $0<\sigma<2$ with $\sigma<\beta$. If $f\in\Lambda_\beta(\Toron)$ then $(-\Delta)^{\sigma/2}f\in\Lambda_{\beta-\sigma}(\Toron)$, and
$$\|(-\Delta)^{\sigma/2}f\|_{\Lambda_{\beta-\sigma}(\Toron)}\leq C\|f\|_{\Lambda_\beta(\Toron)}.$$
\end{thm}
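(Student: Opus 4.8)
The plan is to deduce the statement from the two tools already in place: the semigroup formula \eqref{formula semigrupo calor} for $(-\Delta)^{\sigma/2}$ and the semigroup characterization of $\Lambda_\beta(\Toron)$ from Proposition \ref{Prop:lambdas-n}. Set $g:=(-\Delta)^{\sigma/2}f$. First I would check that $g$ is a genuine continuous function with $c_\nu(g)=|\nu|^\sigma c_\nu(f)$ and $\|g\|_{L^\infty(\Toron)}\le C\|f\|_{\Lambda_\beta(\Toron)}$: since $\beta>\sigma$, Proposition \ref{Prop:lambdas-n} puts $f$ into a H\"older class covered by Theorem \ref{thm:puntual} (namely $C^{0,\sigma+\varepsilon}(\Toron)$ when $0<\sigma<1$ and $C^{1,\sigma+\varepsilon-1}(\Toron)$ when $1\le\sigma<2$, for a small $\varepsilon>0$), so $g$ is continuous with the stated Fourier coefficients, and estimating the pointwise formula \eqref{puntual01}--\eqref{puntual12} exactly as in the proof of Lemma \ref{lem:periodic distribution} gives the $L^\infty$ bound. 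It then remains to control the $\Lambda_{\beta-\sigma}$-seminorm of $g$.

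Next I would fix an integer $m>\beta/2$ (so also $m>(\beta-\sigma)/2$), so that by Proposition \ref{Prop:lambdas-n}(i) it suffices to prove $\|t^m\partial_t^m T_t g\|_{L^\infty(\Toron)}\le C\|f\|_{\Lambda_\beta(\Toron)}t^{(\beta-\sigma)/2}$ for all $t>0$. The key algebraic point is that $(-\Delta)^{\sigma/2}$ commutes with the heat semigroup: since $T_sf\in C^\infty(\Toron)$ for $s>0$ and both operators act as Fourier multipliers on $C^\infty(\Toron)$, the continuous functions $T_sg$ and $(-\Delta)^{\sigma/2}T_sf$ have the same Fourier coefficients $|\nu|^\sigma e^{-s|\nu|^2}c_\nu(f)$, hence coincide; differentiating $m$ times in $t$ this gives $\partial_t^m T_t g=(-\Delta)^{\sigma/2}h$ with $h:=\partial_t^m T_t f\in C^\infty(\Toron)$.

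Then I would apply \eqref{formula semigrupo calor} to the smooth function $h$, write $T_s h-h=\int_0^s\partial_r T_r h\,dr$, and interchange the order of integration (legitimate by absolute convergence, since $\|\partial_r T_r h\|_{L^\infty(\Toron)}$ stays bounded in $r>0$ for each fixed $t$) to obtain $(-\Delta)^{\sigma/2}h=\frac{2}{\sigma\Gamma(-\sigma/2)}\int_0^\infty\partial_r T_r h\,r^{-\sigma/2}\,dr$. By the semigroup property $\partial_r T_r h=\partial_\tau^{m+1}T_\tau f\big|_{\tau=t+r}$, and since $m+1>\beta/2$, Proposition \ref{Prop:lambdas-n}(i) yields $\|\partial_\tau^{m+1}T_\tau f\|_{L^\infty(\Toron)}\le C\|f\|_{\Lambda_\beta(\Toron)}\tau^{\beta/2-m-1}$. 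Plugging this in,
\[
\|\partial_t^m T_t g\|_{L^\infty(\Toron)}\le C\|f\|_{\Lambda_\beta(\Toron)}\int_0^\infty(t+r)^{\beta/2-m-1}\,\frac{dr}{r^{\sigma/2}}=C\|f\|_{\Lambda_\beta(\Toron)}\,t^{(\beta-\sigma)/2-m},
\]
where the last step is the scaling $r=tu$, the integral $\int_0^\infty(1+u)^{\beta/2-m-1}u^{-\sigma/2}\,du$ being finite because $\sigma/2<1$ (integrability at $u=0$) and $m+1>\beta/2$ (the exponent is $<-1$ at $u=\infty$). Multiplying by $t^m$ gives the desired seminorm bound, which together with the $L^\infty$ estimate of the first paragraph proves $g\in\Lambda_{\beta-\sigma}(\Toron)$ with $\|g\|_{\Lambda_{\beta-\sigma}(\Toron)}\le C\|f\|_{\Lambda_\beta(\Toron)}$.

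I do not expect a serious obstacle: the substance of the argument is contained in Proposition \ref{Prop:lambdas-n}, and this theorem is a clean application. The steps needing the most care are the commutation of $(-\Delta)^{\sigma/2}$ with $T_s$ in the second paragraph, the Fubini justification for the integral representation of $(-\Delta)^{\sigma/2}h$, and---if one insists on a constant independent of $\sigma$---tracking the $\Gamma$-factors through \eqref{asintoticas gamma}; since the statement allows $C$ to depend on $\sigma$, this last point can be ignored.
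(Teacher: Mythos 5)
Your argument is correct, but it follows a genuinely different route from the paper's. The paper keeps $(-\Delta)^{\sigma/2}f$ inside the semigroup formula \eqref{formula semigrupo calor}, splits the defining integral at the moving cutoff $s=t$ into $J_1+J_2$, estimates each piece with Proposition \ref{Prop:lambdas-n}\textit{(i)} --- and it only does this for $0<\sigma<1$; for $1\leq\sigma<2$ it factors $(-\Delta)^{\sigma/2}=(-\Delta)^{(\sigma-1)/2}R\nabla$ and invokes Proposition \ref{Prop:lambdas-n}\textit{(iv)} together with the boundedness of the Riesz transforms on the Lipschitz/Zygmund scale (Zygmund, Calder\'on--Zygmund). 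You instead commute $(-\Delta)^{\sigma/2}$ past $\partial_t^mT_t$ (legitimate, since both are Fourier multipliers and $T_tf$ is smooth for $t>0$) and then use the once-integrated-by-parts representation $(-\Delta)^{\sigma/2}h=\tfrac{2}{\sigma\Gamma(-\sigma/2)}\int_0^\infty\partial_rT_rh\,r^{-\sigma/2}\,dr$, a single absolutely convergent integral valid uniformly for all $0<\sigma<2$. This buys you a unified treatment with no case split at $\sigma=1$, no appeal to Riesz-transform boundedness, and no delicacy about differentiating in $t$ when $t$ also appears as the splitting point; the paper's version, in exchange, stays closer to the classical Stein-type computation and makes visible how the regime $\sigma\geq1$ trades one derivative for the Riesz transforms. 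You also make explicit the $L^\infty$ bound $\|(-\Delta)^{\sigma/2}f\|_{L^\infty(\Toron)}\leq C\|f\|_{\Lambda_\beta(\Toron)}$ needed for the full norm inequality, which the paper leaves implicit. One tiny slip: convergence of $\int_0^\infty(1+u)^{\beta/2-m-1}u^{-\sigma/2}\,du$ at infinity requires $m>(\beta-\sigma)/2$, not merely $m+1>\beta/2$; since you chose $m>\beta/2$ this holds anyway, so nothing is broken.
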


\begin{proof}
Let $f$ be in $\Lambda_\beta(\Toron)$ and let $0<\sigma<\beta$. We first show that
$(-\Delta)^{\sigma/2}f$ is bounded. Suppose that $\beta<2$. Then, by using \eqref{formula semigrupo calor},
\begin{align*}
	|(-\Delta)^{\sigma/2}f(x)| &\leq c_{\sigma}\left[\int_0^1\int_0^t|\partial_sT_sf(x)|\,ds\,
	\frac{dt}{t^{1+\sigma/2}}+2\|f\|_{L^\infty(\Toron)}\int_1^\infty\,\frac{dt}{t^{1+\sigma/2}}\right] \\
	&\leq C_{\sigma}\left[[f]_{\Lambda_\beta(\Toron)}\int_0^1\int_0^ts^{\beta/2-1}\,ds\,
	\frac{dt}{t^{1+\sigma/2}}+\|f\|_{L^\infty(\Toron)}\right] \leq C_\sigma\|f\|_{\Lambda_\beta(\Toron)}.
\end{align*}
If $\beta\geq2$ then we pick $\sigma<\beta'<2\leq\beta$. By 
Proposition \ref{Prop:lambdas-n}\textit{(ii)}, $f\in \Lambda_{\beta'}(\Toron)$
and, by the computation above,
 $|(-\Delta)^{\sigma/2}f(x)|\leq C_\sigma\|f\|_{\Lambda_{\beta'}(\Toron)}\leq C_\sigma\|f\|_{\Lambda_\beta(\Toron)}$.

Secondly, we have to prove that
$$\|t^k\partial_t^kT_t(-\Delta)^{\sigma/2}f(x)\|_{L^\infty(\Toron)}\leq C\|f\|_{\Lambda_\beta(\Toron)}t^{\frac{\beta-\sigma}{2}},\quad \hbox{for}~k=\big[\tfrac{\beta-\sigma}{2}\big]+1.$$
Suppose that $0<\sigma<1$. By Remark \ref{rem:semigrupo calor},
\begin{align*}
    (-\Delta)^{\sigma/2}f(x) &= \frac{1}{\Gamma(-\sigma/2)}\int_0^\infty\big(T_sf(x)-f(x)\big)
    \,\frac{ds}{s^{1+\sigma/2}} \\
     &=\frac{1}{\Gamma(-\sigma/2)}\left(J_1(x,t)+J_2(x,t)\right),
\end{align*}
where $J_1(x,t)$ denotes the part of the integral running from $0$ to $t$. By using the semigroup property, the hypothesis and Proposition \ref{Prop:lambdas-n}\textit{(i)},
\begin{align*}
    |t^k\partial_t^kT_tJ_1(x,t)| &= \abs{t^k\partial_t^kT_t\int_0^t\int_0^s\partial_rT_rf(x)\,dr\,\frac{ds}{s^{1+\sigma/2}}} \\
     &\leq t^k\int_0^t\int_0^s\abs{\partial_w^{k+1}T_wf(x)\big|_{w=t+r}}\,dr
     \,\frac{ds}{s^{1+\sigma/2}} \\
     &\leq t^k\|f\|_{\Lambda_\beta(\Toron)}\int_0^t\int_0^s(t+r)^{\beta/2-k-1}dr
     \,\frac{ds}{s^{1+\sigma/2}} \\
     &= t^{\beta/2}\|f\|_{\Lambda_\beta(\Toron)}\int_0^t\int_0^{s/t}(1+u)^{\beta/2-k-1}
     \,du\,\frac{ds}{s^{1+\sigma/2}} \\
     &\leq Ct^{\beta/2}\|f\|_{\Lambda_\beta(\Toron)}\int_0^t\frac{s}{t}
     \,\frac{ds}{s^{1+\sigma/2}}= C\|f\|_{\Lambda_\beta(\Toron)}t^{\frac{\beta-\sigma}{2}}.
\end{align*}
On the other hand, by the semigroup property and Proposition \ref{Prop:lambdas-n}\textit{(i)},
\begin{align*}
    |t^k\partial_t^kT_tJ_2(x,t)| &\leq t^k\int_t^\infty\abs{\partial_w^kT_wf(x)\big|_{w=t+s}}\,\frac{ds}{s^{1+\sigma/2}}+ \int_t^\infty\abs{t^k\partial_t^kT_tf(x)}\,\frac{ds}{s^{1+\sigma/2}} \\
     &\leq C\|f\|_{\Lambda_\beta(\Toron)}\left(t^k\int_t^\infty(t+s)^{\beta/2-k}
     \frac{ds}{s^{1+\sigma/2}}+ t^{\frac{\beta-\sigma}{2}}\right) \\
     &= C\|f\|_{\Lambda_\beta(\Toron)}t^{\frac{\beta-\sigma}{2}}
     \left(\int_1^\infty(1+u)^{\beta/2-k}\,\frac{du}{u^{1+\sigma/2}}+1\right)= C\|f\|_{\Lambda_\beta(\Toron)}t^{\frac{\beta-\sigma}{2}}.
\end{align*}

Consider now the situation $1\leq\sigma<2$. We can write $(-\Delta)^{\sigma/2}f=(-\Delta)^{\sigma/2-1/2}(-\Delta)^{1/2}f
=(-\Delta)^{\sigma/2-1/2}R\nabla f$, where $R=\nabla(-\Delta)^{-1/2}$ are the Riesz transforms on $\Toron$. Observe that, by Proposition \ref{Prop:lambdas-n}\textit{(v)}, if $f\in\Lambda_\beta(\Toron)$, $\beta>\sigma\geq1$, then $\nabla f\in\Lambda_{\beta-1}(\Toron)$. Therefore, the result follows from the boundedness of the Riesz transforms on the spaces $\Lambda_{\gamma}(\Toron)$, $\gamma>0$, (see Zygmund \cite[Chapter~III,~(13.29)]{Zygmund} for the one dimensional case, and Calder\'on--Zygmund \cite[Theorem 11]{Calderon-Zygmund} for the multidimensional case), and also from the case just proved above ($0<\sigma/2-1/2<1$).
\end{proof}

%%%%%%%%%%%%%%%%%%%%%%%%%%%%%%%%%%%%%%%%%%%%%%%%%%%%%%
\section{Pointwise formula for the fractional Laplacian on $\Toron$}\label{Section:2}
%%%%%%%%%%%%%%%%%%%%%%%%%%%%%%%%%%%%%%%%%%%%%%%%%%%%%%

In this section we obtain the pointwise formula for $(-\Delta)^{\sigma/2}f(x)$ when $f$ belongs to the H\"older spaces. We also prove the pointwise limits \eqref{limit0} and \eqref{limit2}.

Let $f\in C^\infty(\Toron)$. For any $N\in\N$ there exists a constant $C_{N,f}$ such that $|c_\nu(f)|\leq C_{N,f}|\nu|^{-N}$, for all $\nu\in\Zn$, $\nu\neq0$. Therefore, the series that defines $(-\Delta)^{\sigma/2}f$ is absolutely convergent and it is a $C^\infty(\Toron)$-function. We also have the symmetry property $\langle(-\Delta)^{\sigma/2}f,g\rangle_{L^2(\Toron)}=\langle f,(-\Delta)^{\sigma/2}g\rangle_{L^2(\Toron)}$, $g\in C^{\infty}(\Toron)$. In fact, the series in \eqref{series toro} converges in $L^2(\Toron)$ whenever $f$ has the property that $\sum_{\nu\in\Zn}\abs{\nu}^{2\sigma}\abs{c_\nu(f)}^2<\infty$, that is, when $f$ is in the Sobolev space $H^\sigma=\Dom((-\Delta)^{\sigma/2})$. This allows us to extend the definition of $(-\Delta)^{\sigma/2}$ to this class.

Consider the test space $C^\infty(\Toron)$ endowed with the family of norms
$$\|\phi\|_k^2:=\|(\Id - \Delta)^k\phi\|_{L^2(\Toron)}^2=\sum_{\nu\in\Zn}
(1+|\nu|^2)^k|c_\nu(\phi)|^2,\quad k\geq1,$$
A real linear functional $S$ on $C^\infty(\Toron)$ is a periodic distribution if it satisfies the following continuity property: if $\phi_j\in C^\infty(\Toron)$, $\|\phi_j\|_k\to0$ as $j\to\infty$ for every $k\in\mathbb{N}$, then $S(\phi_j)\to0$. Note that if $f\in L^1(\Toron)$ then $f$ defines a periodic distribution by $f(\phi)=\int_{\Toron} f\phi$. See Schwartz \cite[Chapter~VII]{Schwartz}. The fractional Laplacian on the torus is a continuous linear operator on $C^\infty(\Toron)$. We remark that this is a difference with respect to the fractional Laplacian on $\Real^n$, which is not continuous on the natural test space for the Fourier transform, namely, the Schwartz class $\mathcal{S}(\Real^n)$, see \cite{Silvestre}.

\begin{lem}\label{lem:periodic distribution}
Suppose that $S$ is a continuous linear operator on $C^\infty(\Toron)$, such that  $\langle S\phi,\psi\rangle_{L^2(\Toron)}=\langle\phi,S\psi\rangle_{L^2(\Toron)}$, for all $\phi,\psi\in C^\infty(\Toron)$, and
$$S\phi(x)=\int_{\Toron}(\phi(x)-\phi(y))K(x-y)\,dy,\quad \phi\in C^\infty(\Toron),~x\in\Toron.$$
Assume that the kernel $K$ above extends to a $2\pi\Zn$-periodic function on $\Real^n$ with
\begin{equation}\label{size}
\abs{K(x)}\leq\frac{C_{n,\gamma}}{\abs{x}^{n+\gamma}},\quad x\in Q_n,
\end{equation}
for some $0\leq\gamma<1$. Let $f\in C^{0,\gamma+\varepsilon}(\Toron)$, with $0<\gamma+\varepsilon\leq1$, $\varepsilon>0$. Then $Sf$ is well defined as a periodic distribution and it coincides with the continuous function
\begin{equation}\label{Sf}
Sf(x)=\int_{\Toron}(f(x)-f(y))K(x-y)\,dy,\quad x\in\Toron.
\end{equation}
\end{lem}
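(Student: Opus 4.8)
The plan is to show two things: first, that the integral on the right-hand side of \eqref{Sf} converges and defines a continuous function on $\Toron$ for $f\in C^{0,\gamma+\varepsilon}(\Toron)$; second, that this continuous function represents the distribution $Sf$, in the sense that for every $\psi\in C^\infty(\Toron)$ one has $\langle Sf,\psi\rangle=\int_{\Toron}\big(\int_{\Toron}(f(x)-f(y))K(x-y)\,dy\big)\psi(x)\,dx$. To begin, I would fix $x\in\Toron$ and split the inner integral into the region $\{|x-y|<1\}$ and its complement. On the far region, $|f(x)-f(y)|\le 2\|f\|_{L^\infty}$ and $|K(x-y)|\le C$ (using \eqref{size} away from the singularity, since on $Q_n$ the bound forces local integrability only near $0$, and away from $0$ the kernel is bounded), so that piece is trivially finite. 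On the near region, the H\"older condition gives $|f(x)-f(y)|\le [f]_{C^{\gamma+\varepsilon}}|x-y|^{\gamma+\varepsilon}$, and combined with \eqref{size} the integrand is bounded by $C|x-y|^{-(n+\gamma)+\gamma+\varepsilon}=C|x-y|^{-n+\varepsilon}$, which is integrable near the origin in $\Real^n$. This gives absolute convergence, with the bound $|Sf(x)|\le C_{n,\gamma}\big(\|f\|_{L^\infty}+[f]_{C^{\gamma+\varepsilon}}\big)$ uniformly in $x$.

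Next, I would verify continuity of $x\mapsto Sf(x)$. The cleanest route is to write, for $x,x'\in\Toron$,
\[
Sf(x)-Sf(x')=\int_{\Toron}\big[(f(x)-f(x+z))-(f(x')-f(x'+z))\big]K(z)\,dz,
\]
after the change of variables $y=x+z$ (resp.\ $y=x'+z$) and using $2\pi\Zn$-periodicity of both $f$ and $K$ to keep the integral over $\Toron$. Splitting at $|z|<\delta$: on the small-$z$ part each bracketed term is controlled by $2[f]_{C^{\gamma+\varepsilon}}|z|^{\gamma+\varepsilon}$, so that contribution is $\le C\delta^{\varepsilon}$; on $|z|\ge\delta$, $K(z)$ is bounded and the bracket tends to $0$ uniformly as $x'\to x$ by uniform continuity of $f$. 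Choosing $\delta$ small and then $x'$ close gives continuity. (One can even upgrade this to a H\"older modulus for $Sf$, but continuity is all the statement requires.)

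Finally, the identification with the distribution $Sf$. Here the symmetry hypothesis $\langle S\phi,\psi\rangle=\langle\phi,S\psi\rangle$ is the key device. For $f\in C^{0,\gamma+\varepsilon}(\Toron)$ one defines $Sf$ as a periodic distribution by duality: $\langle Sf,\psi\rangle:=\langle f,S\psi\rangle=\int_{\Toron}f(x)\,S\psi(x)\,dx$ for $\psi\in C^\infty(\Toron)$; this makes sense because $S\psi\in C^\infty(\Toron)\subset L^1(\Toron)$ and $f\in L^\infty(\Toron)$, and it is continuous in $\psi$ with respect to the norms $\|\cdot\|_k$ since $S$ is continuous on $C^\infty(\Toron)$ and $f$ defines a distribution. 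It remains to check that this coincides with integrating $\psi$ against the continuous function \eqref{Sf}. Using the integral representation of $S\psi$, Fubini (justified by the absolute convergence established above, together with $\psi,f\in L^\infty$ and the local integrability of $K$), and the antisymmetry of the expression $(f(x)-f(y))$ against the change $x\leftrightarrow y$ in the double integral against $\psi$, one gets
\[
\int_{\Toron}f(x)\,S\psi(x)\,dx=\int_{\Toron}\Big(\int_{\Toron}(f(x)-f(y))K(x-y)\,dy\Big)\psi(x)\,dx,
\]
which is exactly $\langle Sf,\psi\rangle$ with $Sf$ given by \eqref{Sf}. A standard approximation argument (take $\phi_j\in C^\infty(\Toron)$ with $\phi_j\to f$ uniformly, e.g.\ $\phi_j=T_{1/j}f$; the formula holds for each $\phi_j$ by hypothesis, and both sides pass to the limit using the uniform bounds) can be used as an alternative or as a sanity check.

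I expect the main obstacle to be the bookkeeping in the Fubini/symmetrization step: one must take care that the kernel $K$ is only assumed integrable near the diagonal (through \eqref{size}) and merely bounded, not better, away from it, and that the cancellation $(f(x)-f(y))$ is what rescues integrability at the singularity — so the double integral $\iint |f(x)-f(y)|\,|K(x-y)|\,|\psi(x)|\,dy\,dx$ must be shown finite before Fubini is invoked, using the $C^{0,\gamma+\varepsilon}$ regularity exactly as in the first step. The rest is routine, but this interchange is where a careless argument would break down.
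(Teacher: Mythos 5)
Your first two steps (absolute convergence of the integral in \eqref{Sf} and continuity of $x\mapsto Sf(x)$) are fine; the direct continuity argument via the near/far splitting is even slightly cleaner than the paper's, which obtains continuity only as a uniform limit of smooth approximants. Defining $Sf$ as a distribution by $\langle Sf,\psi\rangle:=\langle f,S\psi\rangle$ is also exactly what the paper does.

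The gap is in your primary route for the identification. After Fubini, what must be shown is
$$\iint_{\Toron\times\Toron}\bigl[\psi(x)f(y)-f(x)\psi(y)\bigr]K(x-y)\,dy\,dx=0.$$
This integral is absolutely convergent (the bracket is $O(|x-y|^{\gamma+\varepsilon})$ near the diagonal, since $\psi$ is Lipschitz and $f$ is H\"older), so the substitution $x\leftrightarrow y$ is legitimate; but it produces $K(y-x)$, not $K(x-y)$, so the advertised ``antisymmetry'' cancellation only closes if $K$ is even. Evenness is true for the kernel \eqref{kernel} used later, but it is not among the hypotheses of the lemma. The other way to get the vanishing is from the symmetry $\langle S\phi,\psi\rangle=\langle\phi,S\psi\rangle$, but that is assumed only for smooth $\phi$, and extending it to H\"older $f$ is precisely the approximation step you relegate to a parenthesis --- so as a stand-alone argument this step is either incomplete or circular.

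Your parenthetical approximation argument is in fact the paper's proof and does close the gap, but ``both sides pass to the limit using the uniform bounds'' conceals the two points that actually need checking: (i) $[T_{1/j}f]_{C^{0,\gamma+\varepsilon}(\Toron)}\leq[f]_{C^{0,\gamma+\varepsilon}(\Toron)}$ uniformly in $j$, so the near-diagonal part of $\int_{\Toron}(f_j(x)-f_j(y))K(x-y)\,dy$ is small uniformly in $j$; and (ii) on the far part one uses $\|f_j-f\|_{L^\infty(\Toron)}\to0$ (or the $L^2$ convergence). With these, $Sf_j$ converges both to the distribution $Sf$ (by duality) and, uniformly, to the continuous function on the right of \eqref{Sf}, and uniqueness of limits finishes the proof. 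I would promote that argument from sanity check to main proof, or else add the hypothesis (or a verification) that $K$ is even.
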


\begin{proof}
By \eqref{size} and the assumption on $f$, the integral in \eqref{Sf} is absolutely convergent. Indeed, for each $x\in\Toron$,
$$\int_{\Toron}|f(x)-f(y)||K(x-y)|\,dy\leq C\int_{Q_n}|x-y|^{\varepsilon-n}dy<\infty.$$
As $f\in L^1(\Toron)$, we can define $Sf$ as a periodic distribution by using the symmetry of $S$, that is, $(Sf)(\phi):=f(S\phi)=\int_{\Toron} fS\phi$, $\phi\in C^\infty(\Toron)$. Let $f_j(x)=T_{1/j}f(x)$, $j\in\N$, $x\in\Toron$, where $T_t$ is the heat semigroup \eqref{eq:heat semigroup}. It is well known that $f_j\in C^\infty(\Toron)$ and that $f_j\to f$, $j\to\infty$, in $L^p(\Toron)$, $1\leq p\leq\infty$ (the latter is a consequence of \cite[Chapter~VII,~Theorem~2.11]{Stein-Weiss}). It is easy to check that $[f_j]_{C^{\gamma+\varepsilon}(\Toron)}\leq[f]_{C^{\gamma+\varepsilon}(\Toron)}$, for all $j$. Now, from the $L^p(\Toron)$-convergence of $f_j$ to $f$, we can see that $Sf_j\to Sf$ as periodic distributions, which is to say $\lim_{j\to\infty}(Sf_j)(\phi)=\lim_{j\to\infty}\int_{\Toron} f_jS\phi=\int_{\Toron} f(S\phi)=(Sf)(\phi)$, for each $\phi\in C^\infty(\Toron)$. Let $\eta>0$ be arbitrary. There exists $\delta>0$ such that
$$C_{n,\gamma}[f]_{C^{\gamma+\varepsilon}(\Toron)}\int_{\abs{x-y}<\delta,\,y\in Q_n}\abs{x-y}^{\varepsilon-n}dy<\frac{\eta}{3}.$$
Then, for all $j$,
$$\abs{\int_{\abs{x-y}<\delta,\,y\in\Toron}(f_j(x)-f_j(y))K(x-y)\,dy}+\abs{\int_{\abs{x-y}<\delta,\,y\in\Toron}(f(x)-f(y))K(x-y)\,dy}<\frac{2}{3}\eta.$$
On the other hand,
\begin{multline*}
\abs{\int_{\abs{x-y}\geq\delta,~y\in\Toron}\Big[\big(f_j(x)-f_j(y)\big)-\big(f(x)-f(y)\big)\Big]K(x-y)\,dy} \\
\leq C\abs{f_j(x)-f(x)}+C\left(\int_{\Toron}\abs{f_j(y)-f(y)}^2dy\right)^{1/2}\leq\frac{\eta}{3},
\end{multline*}
for all sufficiently large $j$, uniformly in $x$ in a compact subset of $\Toron$. Therefore, the right hand side of \eqref{Sf} with $f_j$ converges uniformly on compact subsets of $\Toron$ to the right hand side of \eqref{Sf} with $f$, and the limit is a continuous function. By uniqueness of the limits, \eqref{Sf} holds.
\end{proof}

\begin{proof}[Proof of Theorem \ref{thm:puntual}]
The second identity in \eqref{kernel} follows from \eqref{eq:Nucleo calor}, Tonelli's theorem and the change of variables $|x-2\pi\nu|^2/(4t)=s$. Indeed,
\begin{align*}
\int_0^\infty W_t(x)\,\frac{dt}{t^{1+\sigma/2}} &= \sum_{\nu\in\Zn}\int_0^\infty\frac{e^{-\frac{|x-2\pi\nu|^2}{4t}}}{(4\pi t)^{n/2}}\,\frac{dt}{t^{1+\sigma/2}} \\
&= \frac{2^\sigma}{\pi^{n/2}}\Bigg(\int_0^\infty e^{-s}s^{\frac{n+\sigma}{2}}\frac{ds}{s}\Bigg)
\sum_{\nu\in\Zn}\frac{1}{|x-2\pi\nu|^{n+\sigma}}.
\end{align*}

To prove \eqref{puntual01}, suppose for the moment that $f\in C^\infty(\Toron)$. Recall that
\begin{equation}\label{calor at 1}
T_t1(x) =1,
\end{equation}
for all $x\in\Toron$, $t>0$. Then, by the formula with the heat semigroup in \eqref{formula semigrupo calor},
\begin{equation}\label{prefubini calor}
(-\Delta)^{\sigma/2}f(x)=\frac{1}{|\Gamma(-\sigma/2)|}\int_0^\infty\int_{\Toron}W_t(x-y)(f(x)-f(y))\,dy\,\frac{dt}{t^{1+\sigma/2}}.
\end{equation}
Since $f\in C^\infty(\Toron)$, by Tonelli's theorem and \eqref{kernel},
\begin{equation}\label{cuenta}
\begin{aligned}
\int_0^\infty\int_{\Toron}&|W_t(x-y)(f(x)-f(y))|\,dy\,\frac{dt}{t^{1+\sigma/2}}=
C\sum_{\nu\in\Zn}\int_{Q_n}\frac{|f(x)-f(y+2\pi\nu)|}{|x-y-2\pi\nu|^{n+\sigma}}\,dy\\
&= C\sum_{\nu\in\Zn}\int_{Q_n-2\pi\nu}\frac{|f(x)-f(y)|}{|x-y|^{n+\sigma}}\,dy
= C\int_{\Real^n}\frac{|f(x)-f(y)|}{|x-y|^{n+\sigma}}\,dy.
\end{aligned}
\end{equation}
In the identities above we are identifying $\Toron$ with $Q_n$ and $f$ with its periodic extension.
The last integral in \eqref{cuenta} is absolutely convergent because the periodic extension of $f$ is bounded (which gives integrability at infinity) and H\"older continuous (which gives integrability
when $x$ is close to $y$).
Hence we can apply Fubini's theorem in \eqref{prefubini calor} to obtain \eqref{puntual01} for $f\in C^\infty(\Toron)$.  Observe that for $\nu\neq0$ and $x\in Q_n$, $|x-2\pi\nu|\geq 2\pi|\nu|/2$, so
\begin{equation}\label{key estimate}
\begin{aligned}
0\leq K^{\s/2}(x)&\leq \frac{2^\sigma\Gamma(\frac{n+\sigma}{2})}{|\Gamma(-\sigma/2)|\pi^{n/2}}\Bigg(\frac{1}{|x|^{n+\s}}+\sum_{\nu\neq0}\frac{1}{|\pi\nu|^{n+\s}}\Bigg) \\
&\leq \frac{2^\sigma\Gamma(\frac{n+\sigma}{2})}{\sigma|\Gamma(-\sigma/2)|\pi^{n/2}}\frac{C_n}{|x|^{n+\s}},\quad x\in Q_n.
\end{aligned}
\end{equation}
We have just applied the asymptotic behavior to the Gamma function to see that
\begin{align*}
\sum_{\nu\in\Zn\setminus\{0\}}\frac{1}{|\nu|^{n+\sigma}} &\leq C_n\sum_{k=1}^\infty\frac{1}{k^{1+\sigma}}\frac{\Gamma(k+n)}{\Gamma(k)k^n} \\
&\leq C_n\sum_{k=1}^\infty\frac{1}{k^{1+\sigma}}\leq C_n\sigma^{-1}.
\end{align*}
Therefore we can apply Lemma \ref{lem:periodic distribution} to get \eqref{puntual01}
and the continuity of $(-\Delta)^{\s/2}f$ for $f\in C^{0,\sigma+\varepsilon}(\Toron)$.

Now we establish \eqref{puntual12}. Suppose again that $f\in C^\infty(\Toron)$. Using \eqref{formula semigrupo calor} and \eqref{calor at 1},
\begin{align*}
    (-\Delta)^{\sigma/2}f(x) &= \frac{1}{-\Gamma(-\sigma/2)} \int_0^\infty\int_{\Toron}(f(x)-f(y))W_t(x-y)\,dy\,\frac{dt}{t^{1+\sigma/2}}\\
     &= \frac{1}{-\Gamma(-\sigma/2)}\int_0^\infty\int_{Q_n}(f(x)-f(x-z))W_t(z)\,dz\,\frac{dt}{t^{1+\sigma/2}} \\
     &=  \frac{1}{-\Gamma(-\sigma/2)}\int_0^\infty\int_{Q_n}(f(x)-f(x-z)-\nabla f(x)\cdot z)W_t(z)\,dz\,\frac{dt}{t^{1+\sigma/2}},
\end{align*}
where in the last identity we applied that $\int_{Q_n}z_iW_t(z)\,dz=0$, for all $i=1,\ldots,n$. Since $f\in C^{1,\sigma+\varepsilon-1}(\Toron)$, we have that $\abs{f(x)-f(x-z)-\nabla f(x)\cdot z}\leq C\|f\|_{C^{1,\sigma+\varepsilon-1}(\Toron)}|z|^{\sigma+\varepsilon}$. This and a computation parallel to \eqref{cuenta} allow us to see that the double integral above is absolutely convergent. Therefore, for smooth functions $f$,
$$(-\Delta)^{\sigma/2}f(x)=\int_{\Toron}(f(x)-f(y)-\nabla f(x)\cdot(x-y))K^{\sigma/2}(x-y)\,dy.$$
with $K^{\sigma/2}(x)$ as in \eqref{kernel}. Noticing that the approximation argument in the proof of Lemma \ref{lem:periodic distribution} can be applied also here --one just has to carry on the gradient in the computations--, we get the identity above for any $f\in C^{1,\sigma+\varepsilon-1}(\Toron)$, and the integral is absolutely convergent. For the principal value, note that $\int_{Q_n}z_iK^{\sigma/2}(z)\,dz=0$.
\end{proof}

We now consider the pointwise limits \eqref{limit0} and \eqref{limit2}.
 It can be easily checked that
\begin{equation}\label{asintoticas gamma}
\frac{(\s-2)^{-1}}{\Gamma(-\s/2)}\to\frac{1}{2},~\hbox{as}~\sigma\to2^-,\qquad
\frac{-2/\s}{\Gamma(-\s/2)}\to1,~\hbox{as}~\sigma\to0^+.
\end{equation}

\begin{prop}\label{prop:limit zero-n}
Let $f\in C^{0,\alpha}(\Toron)$, for some $0<\alpha\leq1$. Then, for each $x\in\Toron$,
$$\lim_{\sigma\to0^+}(-\Delta)^{\sigma/2}f(x)=f(x)-\frac{1}{(2\pi)^n}\int_{\Toron} f(y)\,dy.$$
\end{prop}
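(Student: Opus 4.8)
The plan is to reduce $(-\Delta)^{\sigma/2}f(x)$ to a single time integral of the heat semigroup via the pointwise formula of Theorem \ref{thm:puntual}, and then to pass to the limit $\sigma\to0^+$ by splitting that integral at $t=1$ and invoking two uniform estimates on $f(x)-T_tf(x)$.

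First I would fix $x\in\Toron$ and restrict to $0<\sigma<\alpha$. Setting $\varepsilon:=\alpha-\sigma>0$ we have $f\in C^{0,\sigma+\varepsilon}(\Toron)$ with $0<\sigma+\varepsilon\le1$ and $0<\sigma<1$, so Theorem \ref{thm:puntual}(1) gives the absolutely convergent formula \eqref{puntual01}. Substituting the defining expression of $K^{\sigma/2}$ from \eqref{kernel} and interchanging the $y$- and $t$-integrations --- legitimate by the absolute convergence just recalled, via the computation in \eqref{cuenta} --- and then using $T_t1\equiv1$ from \eqref{calor at 1} together with $W_t(x-y)=W_t(y-x)$, I obtain
\begin{equation*}
(-\Delta)^{\sigma/2}f(x)=\frac{1}{-\Gamma(-\sigma/2)}\int_0^\infty\big(f(x)-T_tf(x)\big)\,\frac{dt}{t^{1+\sigma/2}},
\end{equation*}
which is \eqref{formula semigrupo calor}, now for H\"older $f$ (cf.\ Remark \ref{rem:semigrupo calor}).

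Next I would record two bounds, uniform in $x$, on $g(t):=f(x)-T_tf(x)$, with $\bar f:=c_0(f)=\frac{1}{(2\pi)^n}\int_{\Toron}f$. From the periodization in \eqref{eq:Nucleo calor}, $f(x)-T_tf(x)=\int_{\Real^n}\big(f(x)-f(x-z)\big)\frac{e^{-|z|^2/(4t)}}{(4\pi t)^{n/2}}\,dz$, and the $C^{0,\alpha}$-bound $|f(x)-f(x-z)|\le C\min(|z|^\alpha,1)$ (the torus modulus being $\lesssim|z|^\alpha$ since $|\sin u|\le|u|$) yields $|g(t)|\le C$ for all $t>0$ and $|g(t)|\le C\,t^{\alpha/2}$ for $0<t\le1$; on the other hand, from the Fourier series \eqref{calor} and $|\nu|^2\ge1$ for $\nu\neq0$, $|T_tf(x)-\bar f|\le\|f\|_{L^\infty(\Toron)}\sum_{\nu\neq0}e^{-t|\nu|^2}\le C\,e^{-t}$ for $t\ge1$.

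Finally, since $\frac{1}{-\Gamma(-\sigma/2)}=\frac{\sigma/2}{\Gamma(1-\sigma/2)}$ and $\Gamma(1-\sigma/2)\to1$ as $\sigma\to0^+$ (cf.\ \eqref{asintoticas gamma}), it is enough to show $\frac{\sigma}{2}\int_0^\infty g(t)\,\frac{dt}{t^{1+\sigma/2}}\to f(x)-\bar f$. I would split at $t=1$: on $(0,1)$ the first bound gives $\big|\frac{\sigma}{2}\int_0^1 g(t)\,t^{-1-\sigma/2}\,dt\big|\le\frac{C\sigma}{2}\int_0^1 t^{\alpha/2-\sigma/2-1}\,dt=\frac{C\sigma}{\alpha-\sigma}\to0$ (for $0<\sigma<\alpha$); on $(1,\infty)$, writing $g(t)=(f(x)-\bar f)-(T_tf(x)-\bar f)$, one has $\frac{\sigma}{2}(f(x)-\bar f)\int_1^\infty t^{-1-\sigma/2}\,dt=f(x)-\bar f$ exactly, while $\big|\frac{\sigma}{2}\int_1^\infty(T_tf(x)-\bar f)\,t^{-1-\sigma/2}\,dt\big|\le\frac{C\sigma}{2}\int_1^\infty e^{-t}\,dt\to0$ by the second bound. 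Summing the two contributions yields the claim. The main obstacle I anticipate is mild: it is the large-$t$ estimate, which uses the spectral gap of $-\Delta$ on $\Toron$ and is precisely what produces the mean value $\bar f$ in the limit (in sharp contrast with $\Real^n$); the small-$t$ H\"older estimate is what kills the $(0,1)$-piece despite the measure $\frac{\sigma}{2}t^{-1-\sigma/2}\,dt$ carrying infinite mass near the origin, and the interchange of integrals in the first step is already guaranteed by the absolute convergence in Theorem \ref{thm:puntual}. Everything else is a routine computation.
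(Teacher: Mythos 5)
Your proof is correct. It rests on the same three ingredients as the paper's proof --- a split of the time integral at $t=1$, a small-time estimate coming from the H\"older regularity of $f$, a large-time estimate coming from the spectral gap of $-\Delta$ on $\Toron$ (which is exactly what produces the mean value, in contrast with $\Real^n$), and the Gamma-function asymptotics \eqref{asintoticas gamma} --- but it executes them on the other side of Fubini. The paper first integrates in $t$ to form the kernel, rewrites the quantity to be estimated as $\int_{\Toron}(f(x)-f(y))\big[K^{\sigma/2}(x-y)-(2\pi)^{-n}\big]\,dy$, and then decomposes $K^{\sigma/2}-(2\pi)^{-n}$ into the three pieces $I_\sigma+II_\sigma+III_\sigma$ of \eqref{estrella}, each of which must afterwards be paired against $|f(x)-f(y)|$ and controlled through lattice sums such as $\sum_{\nu}e^{-c|x-y-2\pi\nu|^2}|x-y-2\pi\nu|^{-n-\sigma}$. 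You instead stay with the semigroup formula \eqref{formula semigrupo calor}, legitimately extended to H\"older $f$ via Theorem \ref{thm:puntual} and Remark \ref{rem:semigrupo calor}, and estimate $f(x)-T_tf(x)$ directly: your bounds $|f(x)-T_tf(x)|\le Ct^{\alpha/2}$ for $t\le1$ and $|T_tf(x)-\bar f|\le Ce^{-t}$ for $t\ge1$ (with $\bar f:=\frac{1}{(2\pi)^n}\int_{\Toron}f$) are precisely the paper's $I_\sigma$- and $II_\sigma$-estimates after the $y$-integration has been carried out, and your exact evaluation $\frac{\sigma}{2}\int_1^\infty t^{-1-\sigma/2}\,dt=1$ plays the role of the paper's $III_\sigma$ term. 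What your organization buys is that the singular kernel and the lattice sums never appear in the limit argument at all, and the argument transfers verbatim to any semigroup with a spectral gap, in the spirit of Section \ref{sec:regularity}. All the individual steps check out: the Gaussian moment computation giving $t^{\alpha/2}$, the bound $\sum_{\nu\ne0}e^{-t|\nu|^2}\le Ce^{-t}$ for $t\ge1$, the normalization $\frac{1}{-\Gamma(-\sigma/2)}=\frac{\sigma/2}{\Gamma(1-\sigma/2)}\sim\sigma/2$, and the restriction $0<\sigma<\alpha$ needed both to invoke the pointwise formula and to make $\sigma/(\alpha-\sigma)\to0$.
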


\begin{proof}
We must check that
$$(-\Delta)^{\sigma/2}f(x)-f(x)+\frac{1}{(2\pi)^n}\int_{\Toron}f(y)\,dy=\int_{\Toron}(f(x)-f(y))\Bigg[K^{\sigma/2}(x-y)-\frac{1}{(2\pi)^n}\Bigg]dy\to0,$$
as $\sigma\to0^+$. Take any $0<\sigma<\alpha$. Let us call $d_\sigma:=-1/\Gamma(-\s/2)>0$. By \eqref{kernel},
\begin{equation}\label{estrella}
\begin{aligned}
    K^{\sigma/2}(x)-&\frac{1}{(2\pi)^n} = d_\s\int_0^1W_t(x)\,\frac{dt}{t^{1+\sigma/2}}+
    d_\s\int_1^\infty W_t(x)\,\frac{dt}{t^{1+\s/2}}-\frac{1}{(2\pi)^n} \\
     &= d_\s\int_0^1W_t(x)\,\frac{dt}{t^{1+\s/2}}+d_\s\int_1^\infty \left(W_t(x)-\frac{1}{(2\pi)^n}\right)\frac{dt}{t^{1+\s/2}}+\frac{1}{(2\pi)^n}\left(\frac{2d_\s}{\s}-1\right) \\
     &=: I_\s+II_\s+III_\s.
\end{aligned}
\end{equation}
As at the beginning of the proof of Theorem \ref{thm:puntual},
\begin{align*}
0 \leq I_\s &= d_\s\frac{2^\sigma}{\pi^{n/2}}\sum_{\nu\in\Zn}\Bigg(\int_{|x-2\pi\nu|^2/4}^\infty e^{-s}s^{\frac{n+\sigma}{2}}
\frac{ds}{s}\Bigg)\frac{1}{|x-2\pi\nu|^{n+\sigma}} \\
&\leq d_\s\frac{2^\sigma}{\pi^{n/2}}\Bigg(\int_0^\infty e^{-s/2}s^{\frac{n+\sigma}{2}}
\frac{ds}{s}\Bigg)\sum_{\nu\in\Zn}\frac{e^{-c|x-2\pi\nu|^2}}{|x-2\pi\nu|^{n+\sigma}}  \\
&= d_\s\frac{2^{\frac{n+3\s}{2}}\Gamma(\frac{n+\s}{2})}{\pi^{n/2}}
\sum_{\nu\in\Zn}\frac{e^{-c|x-2\pi\nu|^2}}{|x-2\pi\nu|^{n+\sigma}}.
\end{align*}
The constant in front of the sum above behaves like $\s/2$ as $\s\to0^+$, see \eqref{asintoticas gamma}.
For $II_\s(x)$ note that, from \eqref{eq:Nucleo calor}, the Fourier coefficient of $W_t(x)$ corresponding to the zero eigenvalue $\nu=(0,\ldots,0)$ is exactly $\frac{1}{(2\pi)^n}$. Then, using that for nonzero $\nu$ we have $|\nu|\geq1$,
$$\abs{W_t(x)-\frac{1}{(2\pi)^n}}\leq C\sum_{\nu\in\Zn\setminus\set{0}}e^{-t|\nu|^2}\leq Ce^{- t/2}
\sum_{\nu\in\Zn\setminus\set{0}}e^{-t|\nu|^2/2}\leq C_ne^{-t/2},\quad t\geq1.$$
Hence, for a constant $C$ independent of $\s$,
$$|II_\s|\leq c_\s C\int_1^\infty e^{-t/2}\,dt=d_\s C.$$
This estimate and \eqref{asintoticas gamma} give that $II_\s\to0$ as $\s\to0^+$.
Also, $III_\s\to0$ as $\s\to0^+$ because of \eqref{asintoticas gamma}.
Collecting terms in \eqref{estrella},
\begin{multline*}
\int_{\Toron}\abs{f(x)-f(y)}\abs{K^{\sigma/2}(x-y)-\frac{1}{(2\pi)^n}}dy \\
\leq d_\s\frac{2^{\frac{n+3\s}{2}}\Gamma(\frac{n+\s}{2})}{\pi^{n/2}}\int_{\Toron}|f(x)-f(y)|
\left[\sum_{\nu\in\Zn}\frac{e^{-c|x-y-2\pi\nu|^2}}{|x-y-2\pi\nu|^{n+\sigma}}\right]dy+
\|f\|_{L^\infty(\Toron)}F(\sigma),
\end{multline*}
where $F(\s)$ is a function of $\s$ (containing the bounds for the $II_\s$ and $III_\s$) that tends to $0$ as $\s\to0^+$. Also, the first term above goes to $0$ as $\s\to0^+$. Indeed, by the smoothness of $f$ and \eqref{asintoticas gamma},
\begin{align*}
    d_\s\int_{\Toron}|f(x)-f(y)|&\left[\sum_{\nu\in\Zn}
    \frac{e^{-c|x-y-2\pi\nu|^2}}{|x-y-2\pi\nu|^{n+\sigma}}\right]dy
    \leq d_\s C_f\sum_{\nu\in\Zn}\int_{Q_n}\frac{|x-y-2\pi\nu|^{\alpha}}{|x-y-2\pi\nu|^{n+\s}}e^{-c|x-y-2\pi\nu|^2}dy \\
    &= d_\s C_f\int_{\Real^n}\frac{e^{-c|x-y|^2}}{|x-y|^{n+\s-\alpha}}\,dy
     = d_\s C_f\tfrac{\Gamma(\tfrac{\alpha-\s}{2})}{2}\to0,\quad\hbox{as}~\sigma\to0^+.
\end{align*}
\end{proof}

\begin{prop}\label{prop:limit 2-n}
Let $f\in C^2(\Toron)$. Then, for each $x\in\Toron$,
$$\lim_{\sigma\rightarrow 2^-}(-\Delta)^{\sigma/2}f(x)=-\Delta f(x).$$
\end{prop}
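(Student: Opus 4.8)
The plan is to run the same split-and-Taylor argument used for Proposition~\ref{prop:limit zero-n}, now localized around $\sigma=2$. For $\sigma\in(1,2)$ and $\varepsilon=2-\sigma>0$ one has $f\in C^2(\Toron)\subset C^{1,1}(\Toron)=C^{1,\sigma+\varepsilon-1}(\Toron)$, so Theorem~\ref{thm:puntual}(2) applies and, in the absolutely convergent form obtained in its proof,
\[
(-\Delta)^{\sigma/2}f(x)=\int_{Q_n}\big(f(x)-f(x-z)-\nabla f(x)\cdot z\big)\,K^{\sigma/2}(z)\,dz .
\]
I would then decompose the kernel as $K^{\sigma/2}(z)=c_{n,\sigma}|z|^{-n-\sigma}+K^{\sigma/2}_{\mathrm{reg}}(z)$, where $c_{n,\sigma}=\dfrac{2^\sigma\Gamma(\frac{n+\sigma}{2})}{|\Gamma(-\sigma/2)|\,\pi^{n/2}}$ and $K^{\sigma/2}_{\mathrm{reg}}(z)=c_{n,\sigma}\sum_{\nu\neq0}|z-2\pi\nu|^{-n-\sigma}$ is bounded on $Q_n$ by $C_nc_{n,\sigma}$ (the same computation as in \eqref{key estimate}). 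The single fact that drives the proof is that, by \eqref{asintoticas gamma}, $c_{n,\sigma}\to0$ as $\sigma\to2^-$.

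Fix $\delta>0$ and split $\int_{Q_n}=\int_{|z|<\delta}+\int_{\delta\le|z|,\,z\in Q_n}$. On $\{|z|\ge\delta\}$ the integrand is bounded by $2\|f\|_{L^\infty}+\operatorname{diam}(Q_n)\|\nabla f\|_{L^\infty}$ and $\int_{\delta\le|z|,\,z\in Q_n}K^{\sigma/2}(z)\,dz\le C(n,\delta)\,c_{n,\sigma}$, so this part is $O(c_{n,\sigma})\to0$. Similarly, using $|f(x)-f(x-z)-\nabla f(x)\cdot z|\le\tfrac12\|D^2f\|_{L^\infty}|z|^2$, the $K^{\sigma/2}_{\mathrm{reg}}$-contribution to $\{|z|<\delta\}$ is at most $\tfrac12\|D^2f\|_{L^\infty}C_nc_{n,\sigma}\int_{|z|<\delta}|z|^2\,dz=O(c_{n,\sigma})\to0$. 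Hence the whole matter reduces to understanding
\[
c_{n,\sigma}\int_{|z|<\delta}\big(f(x)-f(x-z)-\nabla f(x)\cdot z\big)\,\frac{dz}{|z|^{n+\sigma}} .
\]

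For this last integral I would use Taylor's formula, $f(x)-f(x-z)-\nabla f(x)\cdot z=-\tfrac12 z^{\mathsf{T}}D^2f(x)\,z+E(z)$, with $E(z)=\int_0^1(1-s)\,z^{\mathsf{T}}\big(D^2f(x)-D^2f(x-sz)\big)z\,ds$, so that $|E(z)|\le\tfrac12|z|^2\omega_f(|z|)$, where $\omega_f$ is a modulus of continuity of $D^2f$ (finite and $\to0$ at $0$, since $D^2f$ is uniformly continuous on the compact $\Toron$). The off-diagonal integrals $\int_{|z|<\delta}z_iz_j|z|^{-n-\sigma}\,dz$ vanish by symmetry, while $\int_{|z|<\delta}z_i^2|z|^{-n-\sigma}\,dz=\dfrac{|\mathbb{S}^{n-1}|}{n}\,\dfrac{\delta^{2-\sigma}}{2-\sigma}$, so the quadratic part equals $-\dfrac{\Delta f(x)}{2n}\,|\mathbb{S}^{n-1}|\,c_{n,\sigma}\,\dfrac{\delta^{2-\sigma}}{2-\sigma}$. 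Combining \eqref{asintoticas gamma} with $\Gamma(\tfrac n2+1)=\tfrac n2\Gamma(\tfrac n2)$ and $|\mathbb{S}^{n-1}|=2\pi^{n/2}/\Gamma(\tfrac n2)$ gives $c_{n,\sigma}\,\dfrac{\delta^{2-\sigma}}{2-\sigma}\to\dfrac{n\Gamma(n/2)}{\pi^{n/2}}$ as $\sigma\to2^-$, so the quadratic part tends to $-\Delta f(x)$, all dimensional constants cancelling. The remainder integral is bounded by $\tfrac12\omega_f(\delta)\,|\mathbb{S}^{n-1}|\,c_{n,\sigma}\,\dfrac{\delta^{2-\sigma}}{2-\sigma}\le C_n\,\omega_f(\delta)$ for $\sigma$ near $2$. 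Collecting the four pieces yields
\[
\limsup_{\sigma\to2^-}\big|(-\Delta)^{\sigma/2}f(x)+\Delta f(x)\big|\le C_n\,\omega_f(\delta)\qquad\text{for every }\delta>0,
\]
and letting $\delta\to0^+$ finishes the proof.

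The only genuinely delicate point is the bookkeeping of the constant $c_{n,\sigma}$: the two-sided $\Gamma$-asymptotics \eqref{asintoticas gamma}, the identity $\Gamma(\tfrac n2+1)=\tfrac n2\Gamma(\tfrac n2)$, and the value of $|\mathbb{S}^{n-1}|$ must conspire so that the limit comes out exactly as $-\Delta f(x)$ and not a nonzero multiple of it; everything else is a routine split-and-Taylor estimate slightly simpler than the one in Proposition~\ref{prop:limit zero-n}. Alternatively, granting the heat-semigroup representation \eqref{formula semigrupo calor} for $C^2$ functions (Remark~\ref{rem:semigrupo calor}), the statement follows from $e^{t\Delta}f(x)-f(x)=t\,\Delta f(x)+o(t)$ as $t\to0^+$ (which reduces to $\int_{Q_n}z_i^2W_t(z)\,dz=2t+o(t)$), boundedness of $e^{t\Delta}f(x)-f(x)$ for $t\ge1$, and $\dfrac{2}{2-\sigma}\cdot\dfrac{1}{-\Gamma(-\sigma/2)}\to1$ as $\sigma\to2^-$.
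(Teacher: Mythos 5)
Your proof is correct, but the way you extract the limit from the main term is genuinely different from the paper's. You isolate the $\nu=0$ summand of the periodized kernel, so that after discarding all $O(c_{n,\sigma})$ contributions you are left with the classical Euclidean computation $c_{n,\sigma}\int_{|z|<\delta}\bigl(-\tfrac12 z^{T}D^2f(x)z+E(z)\bigr)|z|^{-n-\sigma}\,dz$, evaluated in polar coordinates; the limit $-\Delta f(x)$ then emerges from the delicate cancellation between $\abs{\mathbb{S}^{n-1}}=2\pi^{n/2}/\Gamma(n/2)$, the factor $1/(2-\sigma)$, and the asymptotics \eqref{asintoticas gamma} of $1/\abs{\Gamma(-\sigma/2)}$ — a computation you carry out correctly. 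The paper instead never leaves the torus: it replaces $\tfrac12 z_i^2$ by $2\sin^2(z_i/2)$ (paying an extra $O(|z|^4)$ error term $J_{2,\sigma}$, killed by $c_{n,\sigma}\to0$) and then computes $J_{3,\sigma}=-2\int_{\Toron}\operatorname{\mathbf{sin}}(\tfrac z2)^TD^2f(x)\operatorname{\mathbf{sin}}(\tfrac z2)K^{\sigma/2}(z)\,dz$ \emph{exactly}, via the Fourier series \eqref{eq:Nucleo calor} of $W_t$ and orthogonality, obtaining $J_{3,\sigma}=-\Delta f(x)$ for every $\sigma\in[1,2)$, so that no surface-measure or Gamma-function constants ever need to be matched for the main term. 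What your route buys is transparency — it is the standard $\Real^n$ argument and needs no trigonometric identities; what the paper's route buys is that the main term is an identity rather than a limit, which is in the spirit of the authors' stated goal of computing all constants exactly through the semigroup, and it generalizes to settings where the kernel is only known as a series. One presentational remark: when you invoke Theorem \ref{thm:puntual}(2) you should say explicitly that $C^2(\Toron)\subset C^{1,1}(\Toron)$ so the hypothesis holds with $\varepsilon=2-\sigma$, and note that the uniform bound $C(n,\delta)$ for $\int_{\abs{z}\ge\delta}K^{\sigma/2}$ and the bound $C_n$ for the regular part can be taken uniform in $\sigma\in[1,2)$, which is clear from \eqref{key estimate}.
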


\begin{proof}
Recall the pointwise formula \eqref{puntual12} in Theorem \ref{thm:puntual}. To shorten the notation, we let $Rf(x,z):=f(x+z)-f(x)-\nabla f(x)\cdot z$, and $\operatorname{\mathbf{sin}}\big(\tfrac{z}{2}\big):=\big(\sin\tfrac{z_1}{2},\ldots,\sin\tfrac{z_n}{2}\big)$. Then, for any $1\leq\sigma<2$,
\begin{align*}
    (-\Delta)^{\sigma/2}f(x) &= -\int_{\Toron}\big(Rf(x,z)-\tfrac12 z^TD^2f(x)z\big)K^{\sigma/2}(z)\,dz \\
    &\quad-\int_{\Toron}\Big(\tfrac12 z^TD^2f(x)z-2\operatorname{\mathbf{sin}}\big(\tfrac{z}{2}\big)^T D^2f(x)\operatorname{\mathbf{sin}}\big(\tfrac{z}{2}\big)\Big)
    K^{\sigma/2}(z)\,dz\\
     &\quad-2\int_{\Toron}\operatorname{\mathbf{sin}}\big(\tfrac{z}{2}\big)^T D^2f(x)\operatorname{\mathbf{sin}}\big(\tfrac{z}{2}\big)K^{\sigma/2}(z)\,dz \\
     &=: J_{1,\sigma}+J_{2,\sigma}+J_{3,\sigma}.
\end{align*}
Let us show first that $J_{1,\sigma}$ and $J_{2,\sigma}$ tend to $0$ as $\sigma\to2^-$. Let $\varepsilon$ be any positive number. Since $f\in C^2(\Toron)$, there exists $\delta=\delta(\varepsilon)>0$ such that $\abs{D^2f(x)-D^2f(y)}<\varepsilon$ for all $y\in\Toron$ with $\abs{x-y}<\delta$. Hence $\abs{Rf(x,z)-\tfrac12  z^TD^2f(x)z}\le C_{n,f}\abs{z}^2\varepsilon$, if $\abs{z}<\delta$. Then, by \eqref{key estimate} and \eqref{asintoticas gamma},
\begin{align*}
|J_{1,\sigma}|&\le\frac{C_{n,f}2^\sigma\Gamma(\frac{n+\sigma}{2})}{|\Gamma(-\sigma/2)|\pi^{n/2}\sigma}\left[\varepsilon \int_{\abs{z}<\delta,\, z\in Q_n}\abs{z}^{2-n-\sigma}\,dz+\int_{\abs{z}>\delta,\, z\in Q_n}|z|^{-n-\sigma}\,dz\right]\\
&\le \frac{C_{n,f}2^{\sigma}\Gamma(\tfrac{n+\sigma}{2})}{|\Gamma(-\sigma/2)|\pi^{n/2}\sigma}\left[\frac{ \delta^{2-\sigma}\varepsilon}{(2-\sigma)}+\frac{1}{\sigma\delta^{\sigma}} \right]\to C\varepsilon, \quad \hbox{as}\quad\sigma \to 2^-.
\end{align*}
Since $\varepsilon$ was arbitrary, $J_{1,\sigma}\to0$ as $\sigma\to2^-$.
Let us continue with $J_{2,\sigma}$. We have
$$J_{2,\sigma}=-\sum_{i=1}^n\frac{\partial^2f}{\partial x_i^2}(x)\int_{\Toron}\left(\frac{1}{2}z_i^2-2\sin^2\big(\tfrac{z_i}{2}\big)\right)K^{\sigma/2}(z)\,dz.$$
By using the Maclaurin series of $\cos z_i$ and \eqref{key estimate},
\begin{align*}
|J_{2,\sigma}|&\leq C_{n,f}\int_{\Toron}\abs{\frac{1}{2}z_i^2-1+\cos z_i}K^{\sigma/2}(z)\,dz\\
&\le \frac{C_{n,f}2^\sigma\Gamma(\frac{n+\sigma}{2})}{|\Gamma(-\sigma/2)|\pi^{n/2}\sigma}\int_{Q_n}|z|^{4-n-\sigma}\,dz\le\frac{C_{n,f}2^\sigma\Gamma(\frac{n+\sigma}{2})}{|\Gamma(-\sigma/2)|\pi^{n/2}\sigma(4-\sigma)}
\end{align*}
and, due to \eqref{asintoticas gamma}, the last expression tends to $0$ as $\sigma \to 2^-$.
We finally prove that $J_{3,\sigma}=-\Delta f(x)$. By taking into account \eqref{kernel}, \eqref{eq:Nucleo calor}, Tonelli's theorem and the orthogonality of the trigonometric system on the torus,
\begin{align*}
J_{3,\sigma}&=-2\sum_{i=1}^n \frac{\partial^2f}{\partial x_i^2}(x)\int_{\Toron}\sin^2\big(\tfrac{z_i}{2}\big)K^{\sigma/2}(z)\,dz\\
&=\frac{2\Delta f(x)}{\Gamma(-\sigma/2)}\int_0^{\infty}\int_{\Toron}\sin^2\big(\tfrac{z_1}{2}\big)
W_t(z)\,dz\,\frac{dt}{t^{1+\sigma/2}} \\
&=\frac{2\Delta f(x)}{\Gamma(-\sigma/2)(2\pi)^n}\int_0^\infty\int_{\Toron}\sin^2\big(\tfrac{z_1}{2}\big)
\left[\sum_{\nu\in\Zn}e^{-t|\nu|^2}e^{i\nu\cdot z}\right]dz\,\frac{dt}{t^{1+\s/2}}\\
&=\frac{2^{n+1}\Delta f(x)}{\Gamma(-\sigma/2)(2\pi)^n}
\int_0^\infty\int_{\Toron}\left(\frac{1-\cos z_1}{2}\right)\prod_{k=1}^n\left(\frac12+\sum_{\nu_k\in\N}e^{-t\nu_k^2}\cos{\nu_kz_k}\right)dz\,\frac{dt}{t^{1+\s/2}}\\
&= \frac{2^{n+1}\Delta f(x)}{\Gamma(-\sigma/2)(2\pi)^n}
\int_0^\infty\int_{\Toron}\frac{1}{2^n}\left(\frac12-e^{-t}\cos^2z_1\right)dz\,\frac{dt}{t^{1+\s/2}}\\
&= \frac{\Delta f(x)}{\Gamma(-\sigma/2)}
\int_0^\infty\big(1-e^{-t}\big)\,\frac{dt}{t^{1+\s/2}}=-\Delta f(x).
\end{align*}
 \end{proof}

\begin{rem}\label{rem:limit 2+}
When $f\in C^{2,\alpha}(\Toron)$, $0<\alpha\leq1$, we also have $\lim_{\sigma \to 2^+} (-\Delta)^{\sigma/2}f(x)=
-\Delta f(x)$. Indeed, for $2<\sigma<3$ we can write $\sigma=2+\epsilon$ for some $\epsilon>0$.  Then, by Proposition \ref{prop:limit zero-n},
$$\lim_{\sigma \to 2^+} (-\Delta)^{\sigma/2}f(x)=\lim_{\epsilon \to 0^+} (-\Delta)^{\epsilon/2}(-\Delta)f(x)=(-\Delta)f(x)-\frac{1}{(2\pi)^n}\int_{\Toron}(-\Delta)f(y)\,dy=-\Delta f(x).$$
This and Proposition \ref{prop:limit 2-n} yield \eqref{limit2}.
\end{rem}

\begin{rem}\label{rem:semigrupo calor}
Under the hypothesis of Theorem \ref{thm:puntual}, formula \eqref{formula semigrupo calor} holds
for functions $f$ in $\Lambda_\beta(\Toron)$.
 Indeed, just write down the kernel in \eqref{puntual01} and \eqref{puntual12} in terms of the heat kernel and apply Fubini's theorem, by taking into account that $\int_{Q_n}(x-y)W_t(x-y)\,dy=0$ in the second case.
\end{rem}

\bigskip

\noindent\textbf{Acknowledgement.} We thank Luis Caffarelli for useful comments that helped us to improve the paper.

%%%%%%%%%%%%%%%%%%%%%%%%%%%%%%%%%%%%%%%%%%%%%%%%%%%%%%

%%%%%%%%%%%%%%%%%%%%%%%%%%%%%%%%%%%%%%%%%%%%%%%%%%%%%%

%%%%%%%%%%%%%%%%%%%%%%%%%%%%%%%%%%%%%%%%%%%%%%%%%%%%%%
\end{document}